\documentclass[12pt]{article}
\usepackage{graphicx}
\usepackage{amsmath,amsthm,amssymb,enumerate}
\usepackage{euscript,mathrsfs}
\usepackage{color}
\usepackage{dsfont}
\usepackage{url}
\usepackage[left=2cm,right=2cm,top=3.5cm,bottom=3.5cm]{geometry}
\usepackage{color}
\usepackage[framemethod=tikz]{mdframed}
\allowdisplaybreaks

\usepackage{soul}

\catcode`\@=11 \@addtoreset{equation}{section}

\catcode`\@=12

\newtheorem{Theorem}{Theorem}[section]
\newtheorem{Proposition}[Theorem]{Proposition}
\newtheorem{Lemma}[Theorem]{Lemma}
\newtheorem{Corollary}[Theorem]{Corollary}

\theoremstyle{definition}
\newtheorem{Definition}[Theorem]{Definition}

\newtheorem{Remark}[Theorem]{Remark}

\newcommand{\bTheorem}[1]{
	\begin{Theorem} \label{T#1} }
	\newcommand{\eT}{\end{Theorem}}

\newcommand{\bProposition}[1]{
	\begin{Proposition} \label{P#1}}
	\newcommand{\eP}{\end{Proposition}}

\newcommand{\bLemma}[1]{
	\begin{Lemma} \label{L#1} }
	\newcommand{\eL}{\end{Lemma}}

\newcommand{\bCorollary}[1]{
	\begin{Corollary} \label{C#1} }
	\newcommand{\eC}{\end{Corollary}}

\newcommand{\bRemark}[1]{
	\begin{Remark} \label{R#1} }
	\newcommand{\eR}{\end{Remark}}

\newcommand{\bDefinition}[1]{
	\begin{Definition} \label{D#1} }
	\newcommand{\eD}{\end{Definition}}

\newcommand{\pprec}{\prec \hspace{-0.2cm} \prec}

\newcommand{\tvmE}{\widetilde{\mathcal{E}}}

\newcommand{\tvm}{\widetilde{\vc{m}}}

\newcommand{\bfphi}{\boldsymbol{\varphi}}

\newcommand{\bFormula}[1]{
	\begin{equation} \label{#1}}
	\newcommand{\eF}{\end{equation}}

\newcommand{\vrn}{\vr_n}

\newcommand{\Ov}[1]{\overline{#1}}

\newcommand{\toMo}{\stackrel{\mathcal{M}}{\to}}
\newcommand{\toW}{\stackrel{\mathcal{W}}{\to}}

\newcommand{\vr}{\varrho}

\newcommand{\tvr}{\wtilde \vr}

\newcommand{\vu}{\vc{u}}
\newcommand{\vm}{\vc{m}}

\newcommand{\vc}[1]{{\bf #1}}

\newcommand{\Div}{{\rm div}_x}
\newcommand{\Grad}{\nabla_x}
\newcommand{\Dt}{\frac{\rm d}{{\rm d}t}}

\newcommand{\dx}{\,{\rm d} {x}}

\newcommand{\dt}{\,{\rm d} t }

\newcommand{\intO}[1]{\int_{\Omega} #1 \ \dx}

\newcommand{\D}{{\rm d}}

\newcommand{\ep}{\varepsilon}

\newcommand{\Td}{\mathbb{T}^d}

\newcommand{\br}{ \nonumber \\ }

\def\softd{{\leavevmode\setbox1=\hbox{d}%
		\hbox to 1.05\wd1{d\kern-0.4ex{\char039}\hss}}}
\definecolor{Cgrey}{rgb}{0.85,0.85,0.85}
\definecolor{Cblue}{rgb}{0.50,0.85,0.85}
\definecolor{Cred}{rgb}{1,0,0}
\definecolor{fancy}{rgb}{0.10,0.85,0.10}
\definecolor{amaranth}{rgb}{0.9, 0.17, 0.31}

\newcommand\Cbox[2]{%
	\newbox\contentbox%
	\newbox\bkgdbox%
	\setbox\contentbox\hbox to \hsize{%
		\vtop{
			\kern\columnsep
			\hbox to \hsize{%
				\kern\columnsep%
				\advance\hsize by -2\columnsep%
				\setlength{\textwidth}{\hsize}%
				\vbox{
					\parskip=\baselineskip
					\parindent=0bp
					#2
				}%
				\kern\columnsep%
			}%
			\kern\columnsep%
		}%
	}%
	\setbox\bkgdbox\vbox{
		\color{#1}
		\hrule width  \wd\contentbox %
		height \ht\contentbox %
		depth  \dp\contentbox
		\color{black}
	}%
	\wd\bkgdbox=0bp%
	\vbox{\hbox to \hsize{\box\bkgdbox\box\contentbox}}%
	\vskip\baselineskip%
}

\mdfdefinestyle{MyFrame}{%
	linecolor=black,
	outerlinewidth=1pt,
	roundcorner=5pt,
	innertopmargin=\baselineskip,
	innerbottommargin=\baselineskip,
	innerrightmargin=10pt,
	innerleftmargin=10pt,
	backgroundcolor=white!20!white}




\newcommand{\wtilde}{\widetilde}

\allowdisplaybreaks


\begin{document}


\title{\bf Maximal dissipation and well-posedness of the Euler system of gas dynamics}

\author{Eduard Feireisl
	\thanks{The authors gratefully acknowledge Research in Teams Fellowship
		``Entropy methods for evolutionary systems: Analysis and Numerics" of the Erwin Schr\"odinger
		International Institute for Mathematics and Physics, Vienna. \\
		The work of E.F.\ was partially supported by the
		Czech Sciences Foundation (GA\v CR), Grant Agreement
		24--11034S. The Institute of Mathematics of the Academy of Sciences of
		the Czech Republic is supported by RVO:67985840.
		E.F.\ is a member of the Ne\v cas Center for Mathematical Modelling} \,$^\clubsuit$
	\and Ansgar J\"ungel
	\thanks{{The work of A.J.\ was funded in part by the Austrian Science Fund (FWF), grant DOI 10.55776/P33010 and 10.55776/F65. This work has received funding from the European
			Research Council (ERC) under the European Union's Horizon 2020 research and innovation programme, ERC Advanced Grant NEUROMORPH, no.~101018153.}}\, $^\clubsuit$
	\and M\' aria Luk\'a\v{c}ov\'a-Medvi\softd ov\'a\thanks{The work of  M.L.-M. was supported by the Gutenberg Research College and by
		the Deutsche Forschungsgemeinschaft (DFG, German Research Foundation) -- project number 233630050 -- TRR 146 and
		project number 525800857 -- SPP 2410 ``Hyperbolic Balance Laws: Complexity, Scales and Randomness".
		She is also grateful to  the  Mainz Institute of Multiscale Modelling  for supporting her research.}\, $^\clubsuit$
}

\date{}

\maketitle

\centerline{$^*$ Institute of Mathematics of the Academy of Sciences of the Czech Republic}
\centerline{\v Zitn\' a 25, CZ-115 67 Praha 1, Czech Republic}
\centerline{feireisl@math.cas.cz}

\bigskip
\centerline{$^\dagger$Institute of Analysis and Scientific Computing, TU Wien}
\centerline{Wiedner Hauptstr. 8--10, 1040 Wien, Austria}
\centerline{juengel@tuwien.ac.at}

\bigskip
\centerline{$^\ddagger$Institute of Mathematics, Johannes Gutenberg-University Mainz}
\centerline{Staudingerweg 9, 55 128 Mainz, Germany}
\centerline{lukacova@uni-mainz.de}

\bigskip
\centerline{$^\clubsuit$Erwin Schr\"odinger  International Institute for Mathematics and Physics}
\centerline{Boltzmanngasse 9,  1090 Wien, Austria}
\newpage

\medskip

\begin{abstract}
	
We show that any dissipative (measure--valued) solution of the compressible Euler system that complies with Dafermos' criterion of
maximal dissipation is necessarily an admissible weak solution. In addition, we propose a simple, at most two step,
selection procedure to identify a unique semigroup solution in the class of dissipative solutions to the Euler system.
Finally, we introduce a refined version of Dafermos' criterion
yielding a unique solution of the problem for any finite energy initial data.
	
\end{abstract}


{\small
	
	\noindent
	{\bf 2020 Mathematics Subject Classification:}{ 35Q31, 76N10
		(primary); 35B65, 35D99, 35F50
		(secondary) }
	
	\medbreak
	\noindent {\bf Keywords:} Euler system of gas dynamics, weak solution, dissipative solution, maximal dissipative solution, semiflow selection.

	
}

\section{Introduction}
\label{i}

Mathematical models of perfect fluids in continuum mechanics exhibit the
well known difficulties including formation of singularities and ill posedness
with respect to the data. An iconic example is the
\emph{barotropic Euler system} of gas dynamics describing the time evolution of the
density $\vr = \vr(t,x)$ and the velocity $\vu = \vu(t,x)$ of a
compressible perfect (inviscid) fluid:
\begin{align}
	\partial_t \vr + \Div (\vr \vu) &= 0, \label{i1}\\
\partial_t (\vr \vu) + \Div (\vr \vu\otimes \vu) + \Grad p(\vr) &=0, \label{i2}
\end{align}
where $p$ is the pressure depending solely on the fluid density, whereas the thermal effects are ignored. The fluid occupies a domain $\Omega \subset R^d$, $d=1,2,3$, and the evolution is considered in the time interval $t \in [0, \infty)$. Some relevant initial/boundary data must be prescribed to obtain, at least formally, a mathematically well posed problem. We consider the initial conditions
\begin{equation} \label{i3}
	\vr(0, \cdot) = \vr_0,\ (\vr \vu)(0, \cdot) = \vm_0,
\end{equation}
together with the impermeability boundary condition
\begin{equation} \label{i4}
\vu \cdot \vc{n}|_{\partial \Omega} = 0.
\end{equation}		

We recall the well known facts concerning solvability of the Euler system:
\begin{itemize}
\item Problem \eqref{i1}--\eqref{i4} admits a local in time classical solution provided the initial data enjoy Sobolev regularity $W^{k,2}$, $k > 1 + \frac{d}{2}$, the initial density $\vr_0$ is bounded below away from zero, and the initial velocity (momentum) $\vm_0$ satisfies the relevant compatibility
condition, see e.g. the monograph Benzoni-Gavage, Serre	\cite{BenSer}.

\item Smooth solutions develop shock type singularities in a finite time
for a vast class of initial data, see e.g. Dafermos \cite{D4a} and Smoller \cite{SMO}.

\item Singularities in the form of implosions may develop in the physically relevant 3D geometry, see Buckmaster et al.
\cite{BuCLGS}, Cao-Labora et al.\ \cite{CLGSShiSta}, Merle et al.\ \cite{MeRaRoSz},
\cite{MeRaRoSzbis}.

\end{itemize}

\subsection{Weak solutions}

To incorporate possible physically relevant singularities, the concept of \emph{weak} distributional solutions was introduced, along with an admissibility
condition
\begin{equation} \label{i5}
\partial_t E(\vr, \vu) + \Div \left[ \Big( E(\vr, \vu) + p(\vr) \Big) \vu \right] \leq 0,
	\end{equation}
where
\[
E(\vr, \vu) = \frac{1}{2} \vr |\vu|^2 + P(\vr),\ P'(\vr) \vr - P(\vr) = p(\vr)
\]
represents the energy of the system. The weak formulation of \eqref{i1}, \eqref{i2}, along with the admissibility condition (energy inequality)
\eqref{i5}, is capable to identify a unique solution in many physically relevant situations, in particular the so--called
Riemann problem in the 1-D geometry, see e.g. Bianchini, Bressan \cite{BiaBre} or the monograph by Dafermos \cite{D4a}.

The same Riemann problem considered in the multi-D setting, however, gives rise to a large variety of different but still admissible
weak solutions ``constructed'' by the method of convex integration, see e.g. Chiodaroli, De Lellis, and Kreml \cite{ChiDelKre}
among many others. Apart from the Riemann problem, the ill-posedness of the barotropic Euler system was observed in the seminal paper \cite{LS} by De Lellis and Sz\'ekehylidi.  More recently, ill posedness was shown even in the class of H\" older continuous admissible solutions, see Giri, Kwon  \cite{GirKwo}.

Despite the abundance of weak solutions for certain ``wild'' data, a simple question of \emph{existence} of an admissible weak solution for \emph{any} finite energy
initial data remains open, even in the incompressible case, cf. Wiedemann, Sz\' ekelyhidi \cite{SzeWie}, and Wiedemann \cite{Wied}. To fill this gap, in particular
when identifying the limits of numerical approximations,  a larger class of
dissipative (measure--valued) solutions is needed, see e.g. \cite{FeiLukMizSheWa}.

\subsection{Dissipative solutions}

The origin of dissipative solutions goes back to DiPerna \cite{DiP2}, see also DiPerna, Majda \cite{DiPMaj87a}, and their concept of measure--valued solution
to capture the qualitative properties of oscillatory approximations. Unfortunately, the class of Young measures and their various extensions
to include possible concentrations (see e.g. Alibert, Bouchitt\'e \cite{AliBou}) seems to be too large to give rise to a unique solution. In particular,
the unphysical ``numerical'' oscillations may create different measure--valued solutions depending on the approximation method. Instead, only certain
physically observable quantities should be invariant and recovered independently of a specific approximation process. In  \cite{BreFeiHof19},
only the expected value (with respect to the associated Young measure) of the density $\vr$, the momentum $\vm = \vr \vu$, and the scalar quantity $\mathcal{E}$ representing the
total energy were proposed as a new concept of \emph{dissipative solution} of the Euler system.

Roughly speaking, dissipative solutions solve in the distributional sense an extended Euler system:
\begin{align}
	\partial_t \vr + \Div \vm &= 0, \label{i6}\\
	\partial_t \vm + \Div \left( \frac{\vm \otimes \vm}{\vr} \right) + \Grad p(\vr) &= - \Div \mathcal{R} \label{i7}
\end{align}
with a tensor $\mathcal{R} \geq 0$ called \emph{Reynolds stress}. In addition, there is a new variable $\mathcal{E}$ called
\emph{total energy} -- a scalar quantity satisfying
\begin{equation} \label{i8}
\frac{\D }{\dt} \mathcal{E} \leq 0.
	\end{equation}
Moreover, the energy defect
\[
\mathcal{E} - \intO{ E(\vr, \vm) } \geq 0
\]
controls the amplitude of the Reynolds stress, specifically,
\begin{equation} \label{i9}
\mathcal{E} - \intO{ E(\vr, \vm) }  \geq c(p) \int_{\Ov{\Omega}} \D \ {\rm trace}[\mathcal{R}] ,
\end{equation}
where the constant $c(p) > 0$ is determined in terms of the structural properties of the equation of state $p = p(\vr)$.
The integral on the right--hand side of \eqref{i9}
is computed in terms of the measure ${\rm trace}[\mathfrak{R}]$.
We call the quantity
$\intO{ E(\vr, \vm)}$ \emph{mean energy}.
Admissible weak solutions
(with a non--increasing total energy) are therefore characterized by the equality
\[
\mathcal{E}(t) = \intO{ E(\vr, \vm)(t, \cdot) } \ \mbox{valid for a.a.}\ t \geq 0.
\]
We refer to Section \ref{sec.dissip} for a precise definition of dissipative solutions.

\begin{Remark} \label{iR1}
The terminology we use is reminiscent of the turbulence modelling.
Note, however, that the meaning of the quantities like Reynolds stress or
energy defect are rather different in the context of the (inviscid)
Euler system and the models of viscous fluids used to describe turbulence.
	\end{Remark}

\subsubsection{Comparison with other concepts of generalized solutions}

Recently, Eiter and Lasarzik \cite{EitLas1} proposed a general approach applicable to systems of conservation laws. In the particular case of the
barotropic Euler system, their concept of weak solution coincides with the dissipative solutions introduced in the previous section, see \cite[Theorem 5.8]{EitLas1}.

Inspired by the definition of variational solutions to the incompressible Euler system by Lions \cite{LI}, Lasarzik \cite{Lasar} introduced
a similar concept of solutions based on the mere satisfaction of the relative energy inequality. A more sophisticated approach of similar ``variational'' character
was proposed also by Brenier \cite{Breni}. The resulting problem is quite elegant and its solution boils down to resolving a kind of convex variational problem.
As the relative energy inequality is available also for the measure--valued/dissipative solutions of the Euler system, cf.\ e.g.\ Gwiazda, \'Swierczewska-Gwiazda, and
Wiedemann \cite{GSWW} or \cite{FeLMMiSh}, a similar approach can be developed also for the present problem. The class of possible solutions, however, is quite
large
and the desired solution of the Euler system is correctly identified only if it is smooth globally in time. As pointed out by Brenier \cite{Breni},
the resulting solutions may fail in attaining the prescribed initial data, and, in general, they may not be even (weakly) continuous with respect to time.
In particular, the solution may not coincide with the smooth one on its life--span, see the example of the Burgers equation discussed in \cite{Breni}.

\subsubsection{Basic properties of dissipative solutions}
The dissipative solutions enjoy the following properties:

\begin{itemize}
\item {\bf Global existence.} Given the initial data
\[
\vr_0, \ \vm_0, \ \mbox{and} \ \mathcal{E}_0 \geq \intO{ E(\vr_0, \vm_0) },
\]
there exists a global--in--time dissipative solution of the Euler system. The functions $\vr(t, \cdot)$, $\vm(t, \cdot)$ are weakly continuous in time and satisfy
the initial conditions in the weak sense. Moreover, the total energy $\mathcal{E}$ can be identified with a non--increasing c\` agl\` ad function of the time, with
$\mathcal{E}(0) \equiv \mathcal{E}(0-) = \mathcal{E}_0$, see \cite{BreFeiHof19}.

\item {\bf Compatibility.} If
\begin{equation} \label{E0}
\mathcal{E}_0 = \intO{ E(\vr_0, \vm_0) }
\end{equation}
and $\vr$, $\vm$ are continuously differentiable on the time interval $[0, T)$, $\vr > 0$, then $\mathcal{R} = 0$ and $\vr$, $\vm$
represent a classical solution on $[0,T)$, see e.g. \cite{FeLMMiSh}.

\item {\bf Weak--strong uniqueness.}

If \eqref{E0} holds,
and the Euler system admits a continuously differentiable (classical) solution in $[0,T)$, then $\mathcal{R} = 0$ and any dissipative solution
coincides with the classical solution in $[0,T)$.

\item {\bf Asymptotic regularity.}

We define a partial ordering of the class of dissipative solutions emanating from the same initial data,
\begin{equation} \label{i10}
(\vr^1, \vm^1, \mathcal{E}^1)  \prec 	(\vr^2, \vm^2, \mathcal{E}^2) \ \Leftrightarrow \ \mathcal{E}^1 (t) \leq \mathcal{E}^2(t) \ \mbox{for all}\ t > 0.
\end{equation}	
We say that a dissipative solution is \emph{admissible} if it is \emph{minimal} with respect to $\prec$. Admissible dissipative solutions
exist for any finite energy initial data, see \cite{BreFeiHof19}.
As shown in \cite{EF2021JEE}, the energy defect of admissible dissipative solutions vanishes in the long run, specifically,
\begin{equation} \label{i11}
\mathcal{E}(t) - \intO{ E(\vr, \vm) (t, \cdot) } \to 0 \ \Rightarrow \
\int_{\Ov{\Omega}} \D\  \left| \mathcal{R}(t, \cdot) \right|  \to 0 \ \mbox{as}\ t \to \infty.	
	\end{equation}
	\end{itemize}

Fluids arising in real--world applications are always viscous -- they dissipate mechanical energy in some sense.
The Euler system should be therefore understood as a zero viscosity limit. The same is true for
numerical approximations containing ``artificial'' numerical viscosity as a stabilizing mechanism. On the one hand, a vanishing viscosity limit is
a weak solution of the compressible Euler system only if the approximate solutions converge strongly (pointwise a.e.), see \cite{FeiHof22} or \cite[Chapter 7, Theorem 7.5]{FeLMMiSh}. In particular,
the weak convergence of approximate solutions observed for certain numerical schemes gives rise to truly dissipative solutions, meaning $\mathcal{R} \ne 0$,
see Fjordholm et al.~\cite{FKMT}, Luk\'a\v{c}ov\'a et al.~\cite{LSY}. On the other hand, as we show in the present paper, any dissipative
solution satisfying Dafermos' admissibility criterion of maximal dissipation is necessarily a weak solution. The incompatibility of weak convergence and the principle of maximal dissipation
may be seen as a paradigm shift in our understanding of what is a correct limit of numerical approximations of models involving perfect fluids, cf.\ also Chiodaroli and Kreml \cite{ChiKre}.

\subsection{Maximal dissipation principle, DiPerna's conjecture,
Dafermos' admissibility criterion}

The first group of results obtained in the present paper concerns regularity of dissipative solutions.

It may seem that the dissipative solutions may deviate largely from the
weak solutions due to a possibly large amplitude of the
Reynolds stress $\mathcal{R}$. However, as we show in Section \ref{sed}, there exist dissipative solutions with
arbitrarily small amplitude of $\mathcal{R}$. Specifically, for any
initial data and any $\delta > 0$, there exists a dissipative solution satisfying 	
\[
\int_{\Ov{\Omega}} \D \ |\mathcal{R}(t, \cdot) |  \leq \delta \ \mbox{for any}\ t \geq 0.
\]
The proof is based on suitable partial ordering of the set of dissipative solutions and an application of Zorn's lemma.

Next, we recall DiPerna's conjecture \cite{DiP2}:

\begin{quotation}

Any measure--valued solution minimal with respect to $\prec$ is in fact a weak solution of the Euler system.

\end{quotation}

\noindent The validity of the above statement for the Euler
system is still open. Motivated by the seminal work of Dafermos \cite{Dafer},
we refine the relation $\prec$ introduced in \eqref{i10} to
\begin{align}
	(\vr^1, \vm^1, \mathcal{E}^1)  &\prec_{\rm loc} 	(\vr^2, \vm^2, \mathcal{E}^2) \br &\Leftrightarrow \br (\vr^1, \vm^1, \mathcal{E}^1)(t) &= (\vr^2, \vm^2, \mathcal{E}^2)(t)
	\ \mbox{for}\ 0 \leq t \leq T, \ \mathcal{E}^1(t) < \mathcal{E}^2(t),\
	T < t < T+ \delta, \nonumber
\end{align}
for some $0 \leq T < \infty$, $\delta > 0$.  In other words, the solutions $(\vr^1, \vm^1, \mathcal{E}^1)$, $(\vr^2, \vm^2, \mathcal{E}^2)$
coincide up to the time $T$, while
$(\vr^1, \vm^1, \mathcal{E}^1)$
dissipates more energy than $(\vr^2, \vm^2, \mathcal{E}^2)$
on the interval $(T, T + \delta)$. Note carefully that
the relation $\prec_{\rm loc}$ is a strict partial order, meaning it is irreflexive, asymmetric, and transitive.

We say that a dissipative solution is \emph{maximal dissipative} if it is minimal
with respect to the ordering $\prec_{\rm loc}$. Accordingly, maximal dissipative solutions comply with Dafermos' admissibility criterion
based on maximal dissipation.
Our main result stated in Theorem \ref{dT2} below asserts that any maximal
dissipative solution is necessarily a weak solution of the Euler system with a non--increasing
total energy. This can be seen as a rigorous verification of DiPerna's conjecture {(in the framework of Dafermos' admissibility criterion)} in the specific case of the barotropic Euler system.

\subsection{Selection criteria}
\label{sc}

The second main topic discussed in the present paper is the choice of a proper selection criterion applicable to the class of dissipative solutions.
The first step in this direction was undertaken in \cite{BreFeiHof19} (cf.\ also \cite{BreFeiHof19C}), where the approach of
Cardona and Kapitanski \cite{CorKap} motivated by the original work by Krylov \cite{KrylNV} was adapted to the Euler system. The selection procedure
consists in a \emph{successive} minimization of a family of cost functionals of the type
\[
\mathcal{F}_{n,m} = \int_0^\infty \exp(-\lambda_n t) F_m(\vr, \vm, \mathcal{E}) \dt, \ \lambda_n > 0,
\]
where the family of bounded continuous functionals $F_m$ separates points in a suitable phase space. As a result, a \emph{unique} semigroup selection is identified as the asymptotic limit
of the process. The selected solutions depend in the Borel measurable way on the initial data.

The method is a bit awkward and suffers the obvious difficulties:

\begin{itemize}
\item Although the first functional may be clearly specified as
\begin{equation} \label{i12}
\mathcal{F}_{1,1} = \int_0^\infty \exp(-t) \beta (\mathcal{E}(t)) \dt,
\end{equation}
where $\beta$ is a bounded increasing function, the subsequent choice of $F_m$ as well as the exponents $\lambda_n$ is entirely arbitrary.
Obviously, different choices may give rise to different limits and there is no clear indication of possible preferences.

\item The infinite selection process is unlikely to be efficiently numerically implemented/tested.

\end{itemize}

We propose a different much simpler selection process consisting of only two steps. Similarly to \cite{BreFeiHof19}, the first step is minimizing
\[
\mathcal{F}_{1} = \int_0^\infty \exp(-t) \mathcal{E}(t) \dt.
\]	
In comparison with \eqref{i12}, the function $\beta$ is missing, in particular, the functional is not bounded. In order to select a measurable minimizer,
a slight change of the solution space with respect to \cite{BreFeiHof19} is needed. Note that $\mathcal{F}_1$ can be obviously written
in the form
\[
\mathcal{F}_{1} =  \int_0^\infty \exp(-t) \Big[ \mathcal{E}(t) - \intO{E(\vr, \vm)(t, \cdot)} \Big] \dt +  \int_0^\infty \exp(-t) E(\vr, \vm) (t, \cdot) \dt
\]
- the sum of the (weighted) time integral of the energy defect and the mean energy.
There is a number of arguments discussed already in \cite[Section 5.1]{BreFeiHof19} why $\mathcal{F}_1$ should be the first to minimize. In particular, the selected solutions are necessarily admissible (minimal) with respect to the relation $\prec$ introduced in \eqref{i10}.

We conclude the selection process by the second step, namely minimizing
\[
\mathcal{F}_2 = \int_0^\infty \exp(-t) \intO{ F(\vr, \vm, \mathcal{E})(t) } \dt,
\]
where
\[
F: R \times R^d \times R \to [0, \infty)
\]
is a suitable strictly convex function. We may call $F$ ``entropy'' associated to the problem. In contrast to
\cite{BreFeiHof19}, the functional
\[
(\vr, \vm, \mathcal{E}) \mapsto \intO{ F(\vr, \vm, \mathcal{E})(t) }
\]
is neither bounded nor continuous with respect to the weak topologies pertinent to the solution space. Accordingly, the main new issue to be properly addressed is
the Borel measurability of the selected minimizer with respect to the data. While measurability of the solution sets in \cite{BreFeiHof19}
was reduced to measurability of the solution mapping in the Hausdorff topology on compact subsets of the phase space, we have to use
the Wijsman topology on convex closed sets of suitable $L^p$-spaces. Fortunately, under certain conditions, convergence in the latter is equivalent to
the so--called Mosco convergence of closed convex sets suitable to establish continuity of the minimizers.

Once measurability is established, step 2 yields a minimizer of $\mathcal{F}_2$. As the cost functional is strictly convex, this concludes the selection process. Here, convexity of the set of all dissipative solution
emanating from the same initial data plays a crucial role. In sharp
contrast with \cite{BreFeiHof19}, ambiguity of the result
is only due to the choice of the entropy $F$ -- a topic subject to future discussion.

As shown in Theorem \ref{sT1},
the above selection process gives rise to a unique solution in the class of admissible dissipative solutions that enjoys the semigroup property. We call this solution a {\em semigroup solution}.\label{semigroup}

\subsection{Maximal dissipation principle revisited}

Motivated by the selection procedure we introduce a concept of \emph{absolute energy minimizer}
\[
(\underline{\vr}, \underline{\vm}, \underline{\mathcal{E}})
\]
emanating from the initial data $(\vr_0, \vm_0, \mathcal{E}_0)$, and satisfying
\[
\int_0^\infty \exp(-\lambda t) \underline{\mathcal{E}}(t) \dt \leq  \int_0^\infty \exp(-\lambda t) \mathcal{E}(t) \dt \
\mbox{for all}\ \lambda \geq \underline{\lambda}
\]
for any other solution $(\vr, \vm, \mathcal{E})$ starting from the same initial data, where
$\underline{\lambda} = \underline{\lambda} (\vr, \vm, \mathcal{E})$ depends on the solution $(\vr, \vm, \mathcal{E})$.

We show that for each initial data there exists at most one absolute energy minimizer. Moreover, any absolute energy minimizer is minimal
with respect to the order $\prec_{\rm loc}$; whence a weak solution of the Euler system with a non--increasing energy profile. Finally, we observe that
any \emph{global} minimizer with respect to the order $\prec_{\rm loc}$ must be an absolute energy minimizer. This simple but quite important fact provides
a local admissibility criterion applicable on any compact time interval
$[0,T]$ \emph{independent} of the behaviour of solutions in the long run.

\vglue 1 cm

The paper is organized as follows. In Section \ref{w}, we introduce the class of dissipative solutions to the barotropic Euler system and review their basic
properties. In Section \ref{d}, we state and prove our main results concerning the existence of dissipative solutions with
arbitrarily small Reynolds stress (Theorem \ref{dT1} and Corollary \ref{dC1}). Moreover, we show that any maximal dissipative solution is
necessarily a weak solution (Theorem \ref{dT2}). The selection process to identify a unique semigroup solution is discussed in Section \ref{ss}.
In Section \ref{A}, we introduce the class of absolute energy minimizers and discuss their basic properties.

\section{Dissipative solutions to the barotropic Euler system}
\label{w}

In this section, we introduce the class of dissipative solutions to the Euler system.
For definiteness, we restrict ourselves to the isentropic state equation
\begin{equation} \label{w1}
	p(\vr) = a \vr^\gamma,\ a > 0,\ \gamma > 1.
\end{equation}
Accordingly, the corresponding pressure potential is given by
\[
	P(\vr) = \frac{a}{\gamma - 1}\vr^\gamma,\ P'(\vr) \vr - P(\vr) = p(\vr).		
\]
The energy function is defined as
\begin{equation} \label{w2}
	E(\vr, \vm) = \left\{ \begin{array}{l} \frac{1}{2} \frac{|\vm|^2}{\vr} + P(\vr) \ \mbox{if}\ \vr > 0,\\ 0 \ \mbox{if}\ \vr = 0,\ \vm = 0, \\
	\infty \ \mbox{otherwise} \end{array} \right. ,
\end{equation}		
Note that $E: R^2 \to [0, \infty]$ is a convex l.s.c. function, strictly convex on its domain.
Finally, we suppose that $\Omega \subset R^d$ is a bounded Lipschitz domain.

\subsection{Definition of dissipative solutions}
\label{sec.dissip}

The initial data $(\vr_0, \vm_0, \mathcal{E}_0)$ belong to the class
\begin{equation} \label{w7}
	\mathcal{D} = \left\{ (\vr_0, \vm_0) \ \mbox{measurable in}\ \Omega,  \
	\mathcal{E}_0 \in [0, \infty) \Big|\
	\intO{ E(\vr_0, \vm_0)  } \leq \mathcal{E}_0 \right\}.
\end{equation}
As explained in \cite{BreFeiHof19}, the total energy $\mathcal{E}$,
with its initial value $\mathcal{E}_0$,
is formally added as a ``new'' state variable in the definition of dissipative solutions.

\begin{mdframed}[style=MyFrame]

\begin{Definition}[{\bf Dissipative solution}] \label{wD1}

Let $\Omega \subset R^d$ be a bounded Lipschitz domain.
A trio $(\vr, \vm, \mathcal{E})$ is called \emph{dissipative solution} of the Euler system \eqref{i1}--\eqref{i4} in $[0, \infty) \times \Omega$ if the following holds:
\begin{enumerate}
\item{\bf Regularity.}	
\begin{align}
	\vr &\in C_{\rm loc}([0, \infty); W^{-\ell,2}(\Omega)),\ \vr(0, \cdot) = \vr_0 , \br	
	\vm &\in C_{\rm loc}([0, \infty); W^{-\ell,2}(\Omega; R^d)),\ \vm(0, \cdot) = \vm_0,\ \ell > d, \br
	\mathcal{E}: [0, \infty) &\to [0, \infty)
	- \mbox{a c\` agl\` ad function},\ \mathcal{E}(0) = \mathcal{E}_0.
	\label{w3}
\end{align}	
\item{\bf Field equations.}
The equation of continuity \eqref{i1} is satisfied in the weak sense,
\begin{equation} \label{w10}
	\int_{\tau_1}^{\tau_2} \intO{ \Big[ \vr \partial_t \varphi + \vm \cdot \Grad \varphi \Big] } \dt = \left[ \intO{ \vr \varphi } \right]_{t = \tau_1}^{t = \tau_2},\ \vr(0, \cdot) = \vr_0,
\end{equation}
for any $\varphi \in C^1_c([0, \infty) \times \Ov{\Omega})$, $0 \leq \tau_1 \leq \tau_2$.

The momentum equation \eqref{i2} is augmented by the action of the \emph{Reynolds stress}:
\begin{align}
	\int_{\tau_1}^{\tau_2} &\intO{ \Big[ \vm \cdot \partial_t \bfphi + \mathds{1}_{\vr > 0}
		\frac{\vm \otimes \vm}{\vr} : \Grad \bfphi + p(\vr) \Div \bfphi \Big] } \br
	&= - \int_{\tau_1}^{\tau_2} \int_{\Ov{\Omega}} \Grad \bfphi : \D {\mathcal{R}} \dt +  \left[ \intO{ \vm \cdot \bfphi } \right]_{t = \tau_1}^{t = \tau_2},\
	\vm(0, \cdot) = \vm_0
	\label{w11}
\end{align}
for any $\bfphi \in C^1_c([0, \infty) \times \Ov{\Omega}; R^d)$, $\bfphi \cdot \vc{n}|_{\partial \Omega} = 0$, $0 \leq \tau_1 \leq \tau_2$.	

The Reynolds stress tensor $\mathcal{R}$ belongs to the class
\begin{equation} \label{w8}
	\mathcal{R} \in L^\infty_{\rm weak-(*)} \Big(0, \infty; \mathcal{M}^+(\Ov{\Omega}; R^{d \times d}_{\rm sym}) \Big),
\end{equation}
where the symbol $\mathcal{M}^+(\Ov{\Omega}; R^{d \times d}_{\rm sym})$ denotes the set of all positively semi--definite
matrix valued measures on $\Ov{\Omega}$.

\item{\bf Total energy.} The total energy $\mathcal{E}$ is non--increasing
in the time variable, specifically, 
\begin{equation} \label{w12}
	\int_0^\infty \mathcal{E} \partial_t \psi \dt \geq
	- \psi(0) \mathcal{E}_0
\end{equation}
for any $\psi \in C^1_c[0, \infty)$, $\psi \geq 0$.

\item {\bf Compatibility of energy defect and Reynolds stress.}
The \emph{energy defect}
\[
D_{\mathcal{E}}(\tau) = \left( \mathcal{E}(\tau+) - \intO{ E(\vr, \vm)(\tau, \cdot) } \right),\ \tau \geq 0
\]
controls the amplitude of the Reynolds stress,
\begin{equation} \label{w9}
	D_{\mathcal{E}}(\tau) \equiv \left( \mathcal{E}(\tau+) - \intO{ E(\vr, \vm)(\tau, \cdot) } \right) \geq	
	r(d,\gamma) \int_{\Ov{\Omega}} 1 \	\D \ {\rm trace}[\mathcal{R} (\tau, \cdot)]
\end{equation}
for a.a. $\tau > 0$, where
\[
r(d,\gamma) = \min \left\{ \frac{1}{2}, \frac{d \gamma}{\gamma - 1} \right\}.
\]
\end{enumerate}	
	
\end{Definition}	

\end{mdframed}

\begin{Remark} \label{wR1}
	The specific form of the constant $r$ in \eqref{w9} is pertinent to the isentropic equation of state. In the general case, we may consider
	\begin{equation} \label{S2}
		\begin{split}
			&p \in C[0, \infty) \cap C^2(0, \infty),\
			p(0) = 0, \ p'(\vr) > 0 \ \mbox{for}\ \vr > 0; \\
			&\mbox{the pressure potential}\ P
			\ \mbox{determined by}\ P'(\vr) \vr - P(\vr) = p(\vr)\ \mbox{satisfies}\
			P(0) = 0,\\ &\mbox{and}\ P - \underline{a} p,\ \Ov{a} p - P\ \mbox{are convex functions
				for certain constants}\ \underline{a} > 0, \ \Ov{a} > 0,
		\end{split}
	\end{equation}
	see \cite{AbbFeiNov}. The constant $r$ in \eqref{w9} is then determined solely
	in terms of $\underline{a}$, $\Ov{a}$, and the dimension $d$.

\end{Remark}

As already pointed out the density - momentum component $(\vr, \vm)$ of a dissipative solution is nothing other than the expected value of a Young measure for ``conventional'' measure--valued solutions. The total energy
$\mathcal{E}$ is the standard energy $\intO{ E(\vr, \vm)}$ augmented by the energy defect, where the latter corresponds to the sum of oscillation and concentration defects in the measure--valued formulation.

Note that \eqref{w12}, \eqref{w9} imply that
\begin{align}
\vr(t, \cdot) &\geq 0 \ \mbox{a.a. in}\ \Omega,\ \vm(t, \cdot) = 0 \ \mbox{a.a. on the vacuum set}\
\{ \vr(t, \cdot) = 0 \} \ \mbox{for any}\ t \geq 0,\label{w5} \\
\vr &\in C_{\rm weak}([0, \infty); L^\gamma(\Omega)),\
\vm \in C_{\rm weak}([0, \infty); L^{\frac{2 \gamma}{\gamma + 1}}(\Omega; R^d)). \label{w6}	
	\end{align}

%
%
%
%
%
%
%
%
%
	
\subsubsection{Rate of energy dissipation, maximal dissipative solutions}
\label{nte}

Let the data $(\vr_0, \vm_0, \mathcal{E}_0)$ be given. We denote
\[
\mathcal{U}[\vr_0, \vm_0, \mathcal{E}_0] =
\left\{ (\vr, \vm, \mathcal{E})\ \Big|\
(\vr, \vm, \mathcal{E}) \ \mbox{a dissipative solution},\
\vr(0, \cdot) = \vr_0,\ \vm(0, \cdot) = \vm_0,\ \mathcal{E}(0) = \mathcal{E}_0 \right\}
\]
the set of all dissipative solutions on $[0, \infty)$ emanating from the data $(\vr_0, \vm_0, \mathcal{E}_0)$.

To compare the rate of energy dissipation, we introduce two order relations
already discussed in Section \ref{i}. The first one introduced by DiPerna
\cite{DiP2} reads:
\begin{equation} \label{DiP}
(\vr^1, \vm^1, \mathcal{E}^1) \prec
(\vr^2, \vm^2, \mathcal{E}^2) \ \Leftrightarrow \
\mathcal{E}^1 \leq \mathcal{E}^2.
\end{equation}	

\begin{mdframed}[style=MyFrame]

\begin{Definition} [{\bf Admissible dissipative solutions}] \label{wD3}
	
We say that a dissipative solution
\[
(\vr, \vm, \mathcal{E}) \in \mathcal{U}[\vr_0, \vm_0, \mathcal{E}_0]
\]	
is \emph{admissible} if it is minimal with respect to $\prec$ in
$\mathcal{U}[\vr_0, \vm_0, \mathcal{E}_0]$.
	
\end{Definition}	

\end{mdframed}
\noindent

For admissible dissipative solutions, the total energy $\mathcal{E}$ is uniquely determined by $(\vr, \vm)$. Specifically, the following holds.

\begin{Proposition}\label{mT10}

Let $(\vr^i, \vm^i, \mathcal{E}^i)$ be two admissible solutions
emanating from the data $(\vr^i_0, \vm^i_0, \mathcal{E}^i_0)$, $i=1,2$.
Suppose, \[
(\vr^1, \vm^1)(t, \cdot) = (\vr^2, \vm^2)(t, \cdot)
\ \mbox{for any}\ t \in [T, \infty), \quad T\geq 0.
\]
Then
\[
\mathcal{E}^1(t) = \mathcal{E}^2(t) \ \mbox{for any}\ t \in [T, \infty).
\]
	
\end{Proposition}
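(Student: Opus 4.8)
The plan is to use the only thing that distinguishes admissible solutions, namely $\prec$-minimality of the total energy within the class $\mathcal{U}[\vr_0,\vm_0,\mathcal{E}_0]$, together with a cut-and-paste (``restart'') construction. Throughout, write $(\vr,\vm)$ for the common value of $(\vr^1,\vm^1)$ and $(\vr^2,\vm^2)$ on $[T,\infty)$. If I can manufacture, from the data $(\vr^1_0,\vm^1_0,\mathcal{E}^1_0)$, a dissipative solution whose energy is $\prec\mathcal{E}^1$ and strictly smaller somewhere on $[T,\infty)$ whenever $\mathcal{E}^1\neq\mathcal{E}^2$ there, admissibility of $(\vr^1,\vm^1,\mathcal{E}^1)$ is violated; doing the symmetric thing for solution $2$ then forces $\mathcal{E}^1=\mathcal{E}^2$ on $[T,\infty)$.

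First I would record the elementary \emph{splicing} facts. The restriction of a dissipative solution to $[S,\infty)$ is again a dissipative solution there, with initial energy $\mathcal{E}(S)=\mathcal{E}(S-)$; conversely, if $(\vr^a,\vm^a,\mathcal{E}^a,\mathcal{R}^a)$ is a dissipative solution on $[0,S]$, $(\vr^b,\vm^b,\mathcal{E}^b,\mathcal{R}^b)$ a dissipative solution on $[S,\infty)$, and $(\vr^a,\vm^a)(S)=(\vr^b,\vm^b)(S)$ with $\mathcal{E}^b(S)\le\mathcal{E}^a(S)$, then the concatenation (retaining the value $\mathcal{E}^a(S)$ at $t=S$) is a dissipative solution issued from $(\vr^a_0,\vm^a_0,\mathcal{E}^a_0)$. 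The only points needing care are: the concatenated energy must stay non-increasing and c\`agl\`ad across $t=S$ — which is exactly why $\mathcal{E}^b(S)\le\mathcal{E}^a(S)$ is imposed; the boundary terms at $t=S$ in the weak formulations \eqref{w10}, \eqref{w11} cancel because $(\vr,\vm)$ matches there; and the spliced Reynolds stress must still verify the compatibility bound \eqref{w9}.

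The decisive step is to show that $\mathcal{E}^\star:=\min\{\mathcal{E}^1,\mathcal{E}^2\}$, taken on $[T,\infty)$, is the energy of a dissipative solution with spatial profile $(\vr,\vm)$ and initial energy $\min\{\mathcal{E}^1(T),\mathcal{E}^2(T)\}$; note $\mathcal{E}^\star$ is c\`agl\`ad, non-increasing and dominates $\intO{E(\vr,\vm)}$. Since $\mathcal{R}^1$ and $\mathcal{R}^2$ both drive \eqref{w11} for the \emph{same} $(\vr,\vm)$ on $[T,\infty)$, subtracting the two weak momentum identities (whose boundary terms coincide) and invoking Lebesgue differentiation in $t$ shows $\Div\mathcal{R}^1(t,\cdot)=\Div\mathcal{R}^2(t,\cdot)$ (as distributions subject to $\bfphi\cdot\vc{n}|_{\partial\Omega}=0$) for a.a.\ $t>T$. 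Consequently any measurable time-selection $\mathcal{R}^\star(t)\in\{\mathcal{R}^1(t),\mathcal{R}^2(t)\}$ still satisfies \eqref{w11}. Choosing, at a.a.\ $t$, the index $i$ realizing $\mathcal{E}^\star(t+)=\mathcal{E}^i(t+)$ and setting $\mathcal{R}^\star(t):=\mathcal{R}^i(t)$, the bound \eqref{w9} valid for $(\vr^i,\vm^i,\mathcal{E}^i,\mathcal{R}^i)$ transfers verbatim to $(\vr,\vm,\mathcal{E}^\star,\mathcal{R}^\star)$, so the latter is indeed a dissipative solution on $[T,\infty)$.

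It remains to conclude. Without loss of generality $\mathcal{E}^1(T)\le\mathcal{E}^2(T)$, whence $\mathcal{E}^\star(T)=\mathcal{E}^1(T)$. Splice $(\vr^1,\vm^1,\mathcal{E}^1)|_{[0,T]}$ with $(\vr,\vm,\mathcal{E}^\star)|_{[T,\infty)}$: by the previous two steps this is a dissipative solution from $(\vr^1_0,\vm^1_0,\mathcal{E}^1_0)$ whose energy equals $\mathcal{E}^1$ on $[0,T]$ and $\mathcal{E}^\star\le\mathcal{E}^1$ on $[T,\infty)$, hence $\prec\mathcal{E}^1$; minimality of $(\vr^1,\vm^1,\mathcal{E}^1)$ forces $\mathcal{E}^\star=\mathcal{E}^1$ on $[T,\infty)$, i.e.\ $\mathcal{E}^1\le\mathcal{E}^2$ there. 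Splicing instead $(\vr^2,\vm^2,\mathcal{E}^2)|_{[0,T]}$ with $(\vr,\vm,\mathcal{E}^\star)|_{[T,\infty)}$ — admissible since $\mathcal{E}^\star(T)\le\mathcal{E}^2(T)$ — yields a dissipative solution from $(\vr^2_0,\vm^2_0,\mathcal{E}^2_0)$ with energy $\prec\mathcal{E}^2$, and minimality of $(\vr^2,\vm^2,\mathcal{E}^2)$ gives $\mathcal{E}^\star=\mathcal{E}^2$ on $[T,\infty)$. Therefore $\mathcal{E}^1=\mathcal{E}^2$ on $[T,\infty)$. I expect the genuine obstacle to be the third step: verifying carefully that the minimized/spliced object meets \emph{every} requirement of Definition \ref{wD1} at once — preserving the c\`agl\`ad, monotone structure of $\mathcal{E}$ at the junction and, above all, producing a Reynolds stress compatible simultaneously with the momentum balance \eqref{w11} and the energy-defect inequality \eqref{w9}; the ``divergence-free difference, hence free pointwise switching of $\mathcal{R}$'' observation is what makes the selection $\mathcal{R}^\star$ legitimate.
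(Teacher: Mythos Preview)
Your proposal is correct and follows essentially the same strategy as the paper: form $\underline{\mathcal{E}}=\min\{\mathcal{E}^1,\mathcal{E}^2\}$ on $[T,\infty)$, select the Reynolds stress pointwise-in-time from whichever solution realizes the minimum, splice onto each $(\vr^i,\vm^i,\mathcal{E}^i)|_{[0,T]}$, and invoke $\prec$-minimality of both admissible solutions. The paper's proof is terser and simply asserts that the modified pair $(\vr^i,\vm^i,\underline{\mathcal{E}}^i)$ with switched Reynolds stress is again dissipative; your explicit justification via ``$\Div\mathcal{R}^1=\Div\mathcal{R}^2$ for a.a.\ $t$, hence any measurable time-selection works in \eqref{w11}'' is the content behind that assertion, and your remark that $\min$ of two non-increasing c\`agl\`ad functions is again non-increasing c\`agl\`ad is exactly what is needed for \eqref{w12}. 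The WLOG $\mathcal{E}^1(T)\le\mathcal{E}^2(T)$ is harmless but unnecessary, since $\mathcal{E}^\star(T)\le\mathcal{E}^i(T)$ holds for both $i$ automatically.
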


\begin{proof}
	
Denote
\[
\underline{\mathcal{E}} (t) =
\min \left\{ \mathcal{E}^1(t), \mathcal{E}^2(t) \right\} ,\ t \in [T, \infty).
\]
It is easy to check that the solutions
\[
(\vr^i, \vm^i, \underline{\mathcal{E}}^i),\ i = 1,2
\]
where
\[
\underline{\mathcal{E}}^i(t) = \left\{
\begin{array}{l}
\mathcal{E}^i (t) \ t \in [0,T), \\
\underline{\mathcal{E}}(t),\ t \in [T, \infty)
\end{array}
\right. , \ i=1,2,
\]
are again dissipative solutions with the modified Reynolds stress
\[
\mathcal{R}^i(t, \cdot) = \mathds{1}_{ \mathcal{E}^1(t)
> \mathcal{E}^2(t) } \mathcal{R}^2 (t) + \mathds{1}_{ \mathcal{E}^2(t)
\geq \mathcal{E}^2(t) } \mathcal{R}^1(t),\ t \in [T, \infty),\
\mathcal{R}^i(t) \ \mbox{ for } t \in [0,T), \ i = 1,2.
\]
However, as both solutions are minimal with respect to $\prec$, we conclude
\[
\mathcal{E}^i(t) = \underline{\mathcal{E}}(t),\ t \in [T, \infty),\ i =1,2.
\]
	
\end{proof}

DiPerna's conjecture stated in \cite[Section 6, Part (b)]{DiP2} asserts that
``\emph{any admissible dissipative solution is a weak solution}''. To the best of our knowledge this conjecture is still not proven. Moreover, its formulation is not ``deterministic'' as it depends on the behaviour of solutions on the whole interval $(0, \infty).$

Motivated by Dafermos \cite{Dafer}, we introduce a local version of
\eqref{DiP},
\begin{align} \label{Dafer}
(\vr^1, \vm^1, \mathcal{E}^1) &\prec_{\rm loc}
(\vr^2, \vm^2, \mathcal{E}^2)\br &\Leftrightarrow \br
\mbox{there exists}\ T &\geq 0 \ \mbox{such that}\
(\vr^1, \vm^1, \mathcal{E}^1)(\tau, \cdot) =
(\vr^2, \vm^2, \mathcal{E}^2)(\tau, \cdot) \ \mbox{for any}\ \tau \in [0,T],\br
\mathcal{E}^1(\tau) &< \mathcal{E}^2(\tau) \ \mbox{for}\
\tau \in (T, T+ \delta) \ \mbox{for some}\ \delta > 0.
\end{align}	

\begin{mdframed}[style=MyFrame]
	
\begin{Definition}[{\bf Maximal dissipative solutions}] \label{wD4}
We say that a dissipative solution \\ $(\vr, \vm, \mathcal{E}) \in \mathcal{U}[\vr_0, \vm_0, \mathcal{E}_0]$ is \emph{maximal dissipative}	
if it is minimal with respect to the order $\prec_{\rm loc}$ in
$\mathcal{U}[\vr_0, \vm_0, \mathcal{E}_0]$.
\end{Definition}		
	
\end{mdframed}	

One of the main results of this paper -- Theorem \ref{dT2} --
is a rigorous proof of DiPerna's conjecture in Dafermos' framework. Specifically, we show that any maximal dissipative solution is necessarily a weak solution of the Euler system.

\subsection{Properties of dissipative solutions}
\label{s}

We start by introducing the necessary function spaces framework. In accordance with \eqref{w9}, the dissipative solutions belong to the convex set
\begin{equation} \label{s1}
\mathcal{D}= \left\{ (\vr, \vm, \mathcal{E}) \ \Big| \ \vr, \vm \ \mbox{measurable in}\ \Omega,\ \mathcal{E} \geq 0
\ \intO{ E(\vr, \vm)(\tau) } \leq \mathcal{E}(\tau+) \ \mbox{for all}\ \tau \geq 0 \right\}.
\end{equation}

Next, we introduce the trajectory space $\mathcal{T}$, in which solutions live. We consider two different topologies on the trajectory space:
\begin{equation} \label{s2}
	\mathcal{T}_{\rm weak} = C_{\rm loc}([0, \infty); W^{-\ell,2}(\Omega)) \times C_{\rm loc}([0, \infty); W^{-\ell,2}(\Omega;R^d)) \times L^q_{\omega}[0, \infty),
\end{equation}	
and
\begin{equation} \label{s3}
	\mathcal{T}_{\rm strong} = L^q_{\omega}([0, \infty); L^q(\Omega)) \times L^{q}_{\omega}([0, \infty); L^{q}(\Omega;R^d)) \times L^q_{\omega }[0, \infty),
\end{equation}	
$1 < q \leq \frac{2 \gamma}{\gamma + 1} < \gamma$,
where $\omega(t) = \exp(-t)$ is an exponential weight function. Note that the weighted Lebesgue measure $\omega \Dt \otimes \dx$ considered on $[0,\infty) \times \Omega$ is of finite measure. The space $\mathcal{T}_{\rm weak}$ is a separable, complete metric space, while $\mathcal{T}_{\rm strong}$ is a uniformly convex, separable, reflexive Banach space. In particular, both topologies are Polish.

\subsubsection{Global existence}

Given the data $(\vr_0, \vm_0, \mathcal{E}_0) \in \mathcal{D}$ we introduce the solution set
\begin{align}
\mathcal{U}&[\vr_0, \vm_0, \mathcal{E}_0]\br &=
\Big\{
(\vr, \vm, \mathcal{E}) \ \mbox{a dissipative solution with the data}
\ (\vr_0, \vm_0, \mathcal{E}_0) \ \mbox{defined for all}\ t \geq 0 \Big\} \br &\subset \mathcal{T}_{\rm weak} \cap \mathcal{T}_{\rm strong}.
\label{s4}
\end{align}
As shown in \cite[Section 3.1]{BreFeiHof19}, the set
$\mathcal{U}[\vr_0, \vm_0, \mathcal{E}_0]$ is non--empty, meaning the dissipative solution exists for any data $(\vr_0, \vm_0, \mathcal{E}_0) \in \mathcal{D}$.

\begin{Remark} \label{Rs1}
	As a matter of fact, the existence in \cite{BreFeiHof19} was established for periodic boundary conditions. The extension to the impermeability conditions
	is, however, straightforward (see also \cite{BreFeiHof19C}).
\end{Remark}

\subsubsection{Convexity of the solution set}

We start with a crucial observation.
	
\begin{Lemma}[{\bf Convexity}] \label{Ls1}
For any initial data $(\vr_0, \vm_0, \mathcal{E}_0) \in \mathcal{D}$, the solution set
$\mathcal{U}[\vr_0, \vm_0, \mathcal{E}_0]$ is convex.
\end{Lemma}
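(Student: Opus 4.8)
The plan is to verify directly that $\mathcal{U}[\vr_0,\vm_0,\mathcal{E}_0]$ is closed under convex combinations. Let $(\vr^i,\vm^i,\mathcal{E}^i)$, $i=1,2$, be dissipative solutions with Reynolds stresses $\mathcal{R}^i$, both emanating from $(\vr_0,\vm_0,\mathcal{E}_0)$, fix $\lambda\in[0,1]$, and put $\vr:=\lambda\vr^1+(1-\lambda)\vr^2$, $\vm:=\lambda\vm^1+(1-\lambda)\vm^2$, $\mathcal{E}:=\lambda\mathcal{E}^1+(1-\lambda)\mathcal{E}^2$. The only nonlinearities in Definition \ref{wD1} are the momentum flux $\mathfrak{E}(\vr,\vm):=\mathds{1}_{\vr>0}\frac{\vm\otimes\vm}{\vr}+p(\vr)\I$ and the energy $E(\vr,\vm)$; both are convex, $\mathfrak{E}$ with values in the symmetric matrices ordered by the positive semidefinite cone. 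The linear parts of the definition are stable by inspection: the regularity \eqref{w3} is preserved (convex combinations in $C_{\rm loc}([0,\infty);W^{-\ell,2})$ keep the prescribed initial values, and a convex combination of non-increasing c\`agl\`ad functions equal to $\mathcal{E}_0$ at $t=0$ has the same properties), while the continuity equation \eqref{w10} and the total-energy inequality \eqref{w12} pass to the combination verbatim. It then remains to produce a Reynolds stress $\mathcal{R}$ for which \eqref{w11} and \eqref{w9} hold.

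The construction is to absorb the Jensen gap of $\mathfrak{E}$ into $\mathcal{R}$. Set
\[
\mathcal{R}^{\rm def}:=\lambda\,\mathfrak{E}(\vr^1,\vm^1)+(1-\lambda)\,\mathfrak{E}(\vr^2,\vm^2)-\mathfrak{E}(\vr,\vm)\ \geq\ 0,\qquad \mathcal{R}:=\lambda\mathcal{R}^1+(1-\lambda)\mathcal{R}^2+\mathcal{R}^{\rm def}.
\]
Since $\intO{E(\vr^i,\vm^i)(\tau)}\leq\mathcal{E}_0$ for all $\tau$, each $\mathfrak{E}(\vr^i,\vm^i)$ lies in $L^\infty_{\rm weak-(*)}(0,\infty;L^1(\Omega;R^{d \times d}_{\rm sym}))$, so $\mathcal{R}^{\rm def}$, and hence $\mathcal{R}$, belongs to the class \eqref{w8} and is a positive semidefinite matrix-valued measure. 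Forming the $(\lambda,1-\lambda)$-combination of the identities \eqref{w11} for $i=1,2$, using $\Div\bfphi=\I:\Grad\bfphi$ and the definition of $\mathcal{R}^{\rm def}$, one recovers \eqref{w11} for $(\vr,\vm,\mathcal{R})$ with initial datum $\vm_0$; the boundary terms are linear in $\vm$ and cause no trouble.

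What is left — the only genuinely non-routine point — is the compatibility relation \eqref{w9}. By convexity of $E$,
\[
D_{\mathcal{E}}(\tau)=\lambda D_{\mathcal{E}^1}(\tau)+(1-\lambda)D_{\mathcal{E}^2}(\tau)+\intO{\Big(\lambda E(\vr^1,\vm^1)+(1-\lambda)E(\vr^2,\vm^2)-E(\vr,\vm)\Big)(\tau)},
\]
with all three summands nonnegative. The first two dominate $r(d,\gamma)\int_{\Ov{\Omega}}\D\,{\rm trace}[\lambda\mathcal{R}^1+(1-\lambda)\mathcal{R}^2]$ by \eqref{w9} applied to each solution. For the third, one writes out ${\rm trace}[\mathcal{R}^{\rm def}]$ and the integrand $\lambda E(\vr^1,\vm^1)+(1-\lambda)E(\vr^2,\vm^2)-E(\vr,\vm)$: both are assembled from the same nonnegative convexity gaps of $(\vr,\vm)\mapsto\mathds{1}_{\vr>0}|\vm|^2/\vr$, of $P$, and of $p$, and the structural relation between $P$ and $p$ in \eqref{w1} (or \eqref{S2} in general) is exactly what forces
\[
\intO{\Big(\lambda E(\vr^1,\vm^1)+(1-\lambda)E(\vr^2,\vm^2)-E(\vr,\vm)\Big)}\ \geq\ r(d,\gamma)\int_{\Ov{\Omega}}\D\,{\rm trace}[\mathcal{R}^{\rm def}]
\]
with the very constant $r(d,\gamma)$ of \eqref{w9}; adding the two estimates yields \eqref{w9} for $\mathcal{R}$, and since each input holds for a.a.\ $\tau$, so does the conclusion. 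Equivalently, and perhaps more transparently, the trio $(\vr^i,\vm^i,\mathcal{R}^i)$ with weights $(\lambda,1-\lambda)$ is a two-atom measure-valued solution, and the passage to its barycentre is precisely the step through which \eqref{w9} is verified in the existence theory of \cite{BreFeiHof19,AbbFeiNov}; I would invoke that computation rather than redo the arithmetic. This matching of the trace of the convexity-defect stress with the convexity defect of $E$, carrying the sharp constant $r(d,\gamma)$, is where the real work sits; the rest follows from linearity.
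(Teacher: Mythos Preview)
Your proof is correct and follows essentially the same route as the paper's. You package the convexity defect of the momentum flux into a single tensor $\mathcal{R}^{\rm def}$, whereas the paper writes out the kinetic and pressure contributions separately in \eqref{s5}, but the content is identical: absorb the Jensen gap of $\mathfrak{E}$ into the Reynolds stress, then check that the Jensen gap of $E$ controls ${\rm trace}[\mathcal{R}^{\rm def}]$ with the same constant $r(d,\gamma)$, which is exactly the computation the paper carries out in the final display of its proof.
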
	

\begin{proof}
	
Let
\[
(\vr, \vm, \mathcal{E}) = \lambda (\vr^1, \vm^1, \mathcal{E}^1) + (1 - \lambda) (\vr^2, \vm^2, \mathcal{E}^2),\ \lambda \in [0,1],
\]
where
\[
(\vr^i, \vm^i, \mathcal{E}^i) \in \mathcal{U}[\vr_0, \vm_0, \mathcal{E}_0],\
i =1,2.
\]	

Obviously, the trio $(\vr, \vm, \mathcal{E})$ belongs to the regularity class \eqref{w3}, and the equation of continuity \eqref{w10}
as well as the energy inequality \eqref{w12} are satisfied.

As for the momentum balance \eqref{w11}, we easily deduce
\[
	\int_0^\tau \intO{ \Big[ \vm \cdot \partial \bfphi + \mathds{1}_{\vr > 0}
		\frac{\vm \otimes \vm}{\vr} : \Grad \bfphi + p(\vr) \Div \bfphi \Big] }
	= - \int_0^\tau \int_{\Ov{\Omega}} \Grad \bfphi : \D {\mathcal{R}} \dt +  \left[ \intO{ \vm \cdot \bfphi } \right]_{t = 0}^{t = \tau}
\]
for any $\bfphi \in C^1_c([0, \infty) \times \Ov{\Omega}; R^d)$, $\bfphi \cdot \vc{n}|_{\partial \Omega} = 0$, with a new Reynolds stress
\begin{align}
\mathcal{R} &= \lambda \mathcal{R}^1  + (1 - \lambda) \mathcal{R}^2 +
\lambda \mathds{1}_{\vr^1 > 0}
\frac{\vm^1 \otimes \vm^1}{\vr^1} + (1 - \lambda) \mathds{1}_{\vr^2 > 0}
\frac{\vm^2 \otimes \vm^2}{\vr^2} \br
&- \mathds{1}_{[\lambda \vr_1 + (1 - \lambda) \vr_2]>0} \frac{(\lambda \vm^1 +
	(1 - \lambda) \vm^2 ) \otimes (\lambda \vm^1 +
	(1 - \lambda) \vm^2 ) }{\lambda \vr^1 + (1 - \lambda) \vr^2}
 \br &+ \left[ \Big( \lambda p(\vr^1) + (1 - \lambda) p(\vr^2) \Big) - p \left(\lambda \vr^1 + (1 - \lambda) \vr^2 \right) \right] \mathbb{I}.
\label{s5}
\end{align}
It is easy to check
\begin{align}
&\left[
	\lambda \mathds{1}_{\vr^1 > 0}
	\frac{\vm^1 \otimes \vm^1}{\vr^1} + (1 - \lambda) \mathds{1}_{\vr^2 > 0}
	\frac{\vm^2 \otimes \vm^2}{\vr^2} \right. \br
&\quad \left. - \mathds{1}_{\lambda \vr_1 + (1 - \lambda) \vr_2 > 0} \frac{(\lambda \vm^1 +
	(1 - \lambda) \vm^2 ) \otimes (\lambda \vm^1 +
	(1 - \lambda) \vm^2 ) }{[\lambda \vr^1 + (1 - \lambda) \vr^2]> 0}  \right]: (\xi \otimes \xi) \br &\quad =
	\lambda \mathds{1}_{\vr^1 > 0} \frac{|\vc{m}^1 \cdot \xi|^2}{\vr^1} +
	(1 - \lambda) \mathds{1}_{\vr^2 > 0}	\frac{|\vc{m}^2 \cdot \xi|^2}{\vr^2} - \mathds{1}_{[\lambda \vr^1 + (1 - \lambda) \vr^2]> 0} \frac{|(\lambda \vm^1 +
		(1 - \lambda) \vm^2 )) \cdot \xi|^2 }{\lambda \vr^1 + (1 - \lambda) \vr^2}
	\nonumber
\end{align}	
Using the fact that $\vm_i = 0$ whenever $\vr_i = 0$, convexity of the function
\[
(\vr, \vm) \mapsto \frac{|\vm \cdot \xi|^2}{\vr},
\]
and convexity of $p$,
we conclude that the new Reynolds stress is positively semi--definite, meaning
\eqref{w8} holds.

Finally, by the same token,
\begin{align}
\mathcal{E}& - \intO{E(\vr, \vm)} = \lambda \mathcal{E}^1 -
\lambda \intO{ E(\vr^1, \vm^1) } + (1 - \lambda) \mathcal{E}^2 -
(1 - \lambda) \intO{ E(\vr^2, \vm^2)}\br &+
\lambda \intO{ E(\vr^1, \vm^1) } + (1 - \lambda) \intO{ E(\vr^2, \vm^2)}
- \intO{ E\Big( \lambda \vr^1 + (1 - \lambda) \vr^2), \lambda \vm^1 + (1 - \lambda )\vm^2     \Big) }
\br
&\geq \lambda r(d,\gamma) \int_{\Ov{\Omega}} \D \ {\rm trace}\ [\mathcal{R}^1] \dx
+ (1 - \lambda) r(d,\gamma) \int_{\Ov{\Omega}} \D \ {\rm trace} \ [\mathcal{R}^2] \dx
\br
&+
\lambda \intO{ E(\vr^1, \vm^1) } + (1 - \lambda) \intO{ E(\vr^2, \vm^2)}
- \intO{ E\Big( \lambda \vr^1 + (1 - \lambda) \vr^2), \lambda \vm^1 + (1 - \lambda )\vm^2     \Big) }
\br
&\geq r(d, \gamma) \int_{\Ov{\Omega}} \ \D \ {\rm trace}[\mathcal{R}],
\nonumber
\end{align}
which yields the compatibility condition \eqref{w9}.

	\end{proof}

Summarizing, we may infer that
the solution set $\mathcal{U}[\vr_0, \vm_0, \mathcal{E}_0]$ is:
\begin{itemize}
	\item non--empty, convex, compact in $\mathcal{T}_{\rm weak}$;
	\item non--empty, convex, closed and bounded in $\mathcal{T}_{\rm strong}$.
\end{itemize}
	
\subsubsection{Topological properties of the solution set}

It is convenient to consider the data space $\mathcal{D}$ endowed with the (strong) topology of the Hilbert space
\begin{equation} \label{s6}
\mathcal{D} \subset
W^{-\ell,2}(\Omega) \times W^{-\ell,2}(\Omega; R^d) \times R,\ \ell > d.
\end{equation}
More specifically, the data space $\mathcal{D}$ is a convex, locally compact
subset of the aforementioned Hilbert space.

We recall the following result proved in \cite[Section 3.1]{BreFeiHof19}.

\begin{Proposition}[{\bf Sequential stability}] \label{sP1}
Consider a sequence of data
\[
(\vr_{0,n}, \vm_{0,n}, \mathcal{E}_{0,n}) \to
(\vr_0, \vm_0, \mathcal{E}_0) \ \mbox{in}\ \mathcal{D}\ \mbox{as}\ n \to \infty.
\]	
Let
\[
(\vr_n, \vm_n, \mathcal{E}_n) \in \mathcal{U} [\vr_{0,n}, \vm_{0,n}, \mathcal{E}_{0,n}] \ \mbox{for}\ n =1,2,\dots.
\]	

Then there is a subsequence such that
\[
(\vr_{n_k}, \vm_{n_k}, \mathcal{E}_{n_k}) \to (\vr, \vm, \mathcal{E})
\ \mbox{in}\ \mathcal{T}_{\rm weak} \ \mbox{as}\ k \to \infty,
\]
where 	
\[
(\vr, \vm, \mathcal{E}) \in \mathcal{U}[\vr_0, \vm_0, \mathcal{E}_0].
\]
	
	\end{Proposition}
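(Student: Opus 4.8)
The plan is to extract a limit from $(\vr_n,\vm_n,\mathcal{E}_n)$ by soft compactness and then to pass to the limit in the weak formulation of Definition~\ref{wD1}; the only genuine difficulty is the identification of the limiting Reynolds stress together with the compatibility bound \eqref{w9}. \emph{Uniform bounds.} By the energy inequality \eqref{w12} and $\mathcal{E}_{0,n}\to\mathcal{E}_0$ (so $\sup_n\mathcal{E}_{0,n}<\infty$) we have $0\le\mathcal{E}_n(\tau)\le\mathcal{E}_{0,n}\le c$ uniformly in $n$ and $\tau\ge 0$. Since \eqref{w9} forces $\intO{E(\vr_n,\vm_n)(\tau)}\le\mathcal{E}_n(\tau+)\le c$, the coercivity of $E$ (see \eqref{w2}) yields
\[
\sup_n\sup_{\tau\ge0}\Big(\|\vr_n(\tau)\|_{L^\gamma(\Omega)}+\|\vm_n(\tau)\|_{L^{2\gamma/(\gamma+1)}(\Omega;R^d)}+\intO{E(\vr_n,\vm_n)(\tau)}\Big)\le c,
\]
while \eqref{w9} also gives $\sup_n\sup_\tau\int_{\Ov\Omega}1\,\D\,{\rm trace}[\mathcal{R}_n(\tau)]\le c$, so $(\mathcal{R}_n)$ is bounded in $L^\infty_{\rm weak-(*)}(0,\infty;\mathcal{M}^+(\Ov\Omega;R^{d\times d}_{\rm sym}))$ and $p(\vr_n)=a\vr_n^\gamma$, $\mathds{1}_{\vr_n>0}\frac{\vm_n\otimes\vm_n}{\vr_n}$ are bounded in $L^\infty(0,\infty;L^1(\Omega))$.

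\emph{Compactness.} Inserting these bounds into \eqref{w10}, \eqref{w11} shows that $t\mapsto\intO{\vr_n(t)\varphi}$ and $t\mapsto\intO{\vm_n(t)\cdot\bfphi}$ are equi-Lipschitz for fixed admissible test functions; together with the uniform $L^\infty_t L^\gamma_x$, resp.\ $L^\infty_t L^{2\gamma/(\gamma+1)}_x$, bounds and the compact embedding $L^\gamma(\Omega)\hookrightarrow\hookrightarrow W^{-\ell,2}(\Omega)$, $\ell>d$, an Arzel\`a--Ascoli and diagonalisation argument produces a subsequence (not relabelled) with $\vr_n\to\vr$ in $C_{\rm loc}([0,\infty);W^{-\ell,2}(\Omega))$, $\vm_n\to\vm$ in $C_{\rm loc}([0,\infty);W^{-\ell,2}(\Omega;R^d))$; in particular $\vr_n(\tau)\rightharpoonup\vr(\tau)$ in $L^\gamma(\Omega)$, $\vm_n(\tau)\rightharpoonup\vm(\tau)$ in $L^{2\gamma/(\gamma+1)}(\Omega;R^d)$ for each $\tau\ge0$, and the convergence holds also in $\mathcal{T}_{\rm strong}$. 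As the $\mathcal{E}_n$ are non-increasing and uniformly bounded, Helly's selection theorem gives a further subsequence and a non-increasing $\mathcal{E}$ with $\mathcal{E}_n(\tau)\to\mathcal{E}(\tau)$ at every continuity point of $\mathcal{E}$, hence, by dominated convergence, also in $L^q_\omega[0,\infty)$; passing to the c\` agl\` ad representative on $(0,\infty)$ and setting $\mathcal{E}(0)=\mathcal{E}_0=\lim\mathcal{E}_{0,n}$ leaves all integral identities unchanged. Finally, extract a subsequence with $\mathcal{R}_n\overset{*}{\rightharpoonup}\mathcal{R}$, $p(\vr_n)\overset{*}{\rightharpoonup}\overline{p(\vr)}$, $\mathds{1}_{\vr_n>0}\frac{\vm_n\otimes\vm_n}{\vr_n}\overset{*}{\rightharpoonup}\overline{\frac{\vm\otimes\vm}{\vr}}$ and $E(\vr_n,\vm_n)\overset{*}{\rightharpoonup}\overline{E(\vr,\vm)}$ in $L^\infty_{\rm weak-(*)}(0,\infty;\mathcal{M}(\Ov\Omega))$.

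\emph{Passage to the limit.} Equations \eqref{w10} and \eqref{w12} are linear in $(\vr_n,\vm_n,\mathcal{E}_n)$, so they pass to the limit at once, yielding the corresponding identities for $(\vr,\vm,\mathcal{E})$; moreover $\vr(0)=\vr_0$, $\vm(0)=\vm_0$ because $\vr_n(0)=\vr_{0,n}\to\vr_0$, $\vm_n(0)=\vm_{0,n}\to\vm_0$ in $W^{-\ell,2}$. In \eqref{w11} the limit identifies the momentum balance for $(\vr,\vm)$ with the augmented Reynolds stress
\[
\wtilde{\mathcal{R}}=\mathcal{R}+\Big(\overline{\tfrac{\vm\otimes\vm}{\vr}}-\mathds{1}_{\vr>0}\tfrac{\vm\otimes\vm}{\vr}\Big)+\Big(\overline{p(\vr)}-p(\vr)\Big)\mathbb{I},
\]
which is positively semi-definite -- hence belongs to the class \eqref{w8} -- exactly as in the proof of Lemma~\ref{Ls1}, by weak lower semicontinuity of the convex maps $(\vr,\vm)\mapsto\frac{|\vm\cdot\xi|^2}{\vr}$ and $\vr\mapsto p(\vr)$ together with $\mathcal{R}\ge0$.

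\emph{Compatibility bound (the main obstacle).} It remains to verify \eqref{w9} for $\wtilde{\mathcal{R}}$. For a.e.\ $\tau$ one has $\mathcal{E}_n(\tau+)\to\mathcal{E}(\tau+)$, while $\intO{E(\vr_n,\vm_n)(\tau)}$ converges to $\intO{\overline{E(\vr,\vm)}(\tau)}$ up to a non-negative remainder (possible loss of mass to $\partial\Omega$); passing to the limit in \eqref{w9} for $(\vr_n,\vm_n,\mathcal{R}_n)$ gives $\mathcal{E}(\tau+)-\intO{\overline{E(\vr,\vm)}(\tau)}\ge r(d,\gamma)\int_{\Ov\Omega}1\,\D\,{\rm trace}[\mathcal{R}(\tau)]$. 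One then splits the energy defect of $(\vr,\vm)$ into this limit defect plus the oscillation defect $\intO{(\overline{E(\vr,\vm)}-E(\vr,\vm))(\tau)}$, expresses the latter through the non-negative kinetic and pressure defects $\overline{\frac{\vm\otimes\vm}{\vr}}-\mathds{1}_{\vr>0}\frac{\vm\otimes\vm}{\vr}$ and $\overline{p(\vr)}-p(\vr)$ (using $P=\frac{1}{\gamma-1}p$), and compares with ${\rm trace}[\wtilde{\mathcal{R}}]$ term by term; the constant $r(d,\gamma)$ in \eqref{w9} is precisely the one for which this comparison closes, cf.\ the lower-semicontinuity computations in \cite{BreFeiHof19}, \cite{AbbFeiNov}. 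I expect this step -- matching the oscillation/concentration defect of the energy against the trace of $\wtilde{\mathcal{R}}$ with the optimal constant -- to be the technical heart of the argument, the compactness above and the identification of the limit equations being comparatively routine. Collecting everything, $(\vr,\vm,\mathcal{E})$ lies in the regularity class \eqref{w3}, attains the limit data, and satisfies \eqref{w10}--\eqref{w9}; hence $(\vr,\vm,\mathcal{E})\in\mathcal{U}[\vr_0,\vm_0,\mathcal{E}_0]$.
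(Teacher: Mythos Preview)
The paper does not actually prove this proposition: it merely cites \cite[Section 3.1]{BreFeiHof19} (with a remark about extending from periodic to impermeability boundary conditions). Your proposal reconstructs essentially the argument that appears in that reference --- uniform energy bounds, Arzel\`a--Ascoli/Helly compactness, weak-$*$ extraction of the nonlinear fluxes and Reynolds stresses, and absorption of the oscillation/concentration defects into an augmented $\wtilde{\mathcal R}$ --- so in spirit and structure it matches the intended proof.

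Two small technical points are worth tightening. First, the weak-$*$ limits $\overline{E(\vr,\vm)}$, $\overline{p(\vr)}$, $\overline{\vm\otimes\vm/\vr}$ live in $\mathcal M(\Ov\Omega)$, not merely $L^1(\Omega)$; the concentration defect on $\partial\Omega$ (or elsewhere) must be tracked as part of the defect measure and fed into $\wtilde{\mathcal R}$, not discarded as a ``non-negative remainder''. Second, in the compatibility step you need convergence of $\int_{\Ov\Omega}\D\,{\rm trace}[\mathcal R_n(\tau)]$ for a.e.\ $\tau$, which does not follow directly from weak-$*$ convergence in $L^\infty_{\rm weak-(*)}(0,\infty;\mathcal M)$; the standard fix is to pass to the limit in a time-integrated version of \eqref{w9} against non-negative $\psi\in C_c(0,\infty)$ and then localize. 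Once these are handled your closing comparison of the energy defect with ${\rm trace}[\wtilde{\mathcal R}]$ via the explicit relation $P=\frac{1}{\gamma-1}p$ and the kinetic-energy trace identity is exactly how the constant $r(d,\gamma)$ arises in \cite{BreFeiHof19}, so your identification of this as the technical heart is accurate.
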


The sequential stability yields, in particular, the closedness
of the graph of the multivalued solution mapping that is crucial for
Borel measurability of the latter with respect to the data discussed in the next section.

\subsubsection{Measurability of the solution set in $\mathcal{T}_{\rm weak}$}

We recall that a set--valued mapping $\mathcal{U} :\mathcal{D}
\to 2^\mathcal{T}$ ranging in a family of subsets of a topological space $\mathcal{T}$ is (weakly)
measurable if the set
\[
\left\{ d \in \mathcal{D} \ \Big| \ \mathcal{U}(d) \cap B \ne \emptyset \right\}
\]
are measurable for any open set $B \subset \mathcal{T}$. It is worth noting that
$\mathcal{U}$ is measurable if and only if ${\rm cl}[\mathcal{U}]$ is measurable. If $\mathcal{D}$ is a topological space, we may define Borel
measurability requiring the set to be Borel measurable.

If $X$ is a separable complete metric space and the sets $\mathcal{U}$
are compact, (Borel) measurability is equivalent to (Borel) measurability
of the mapping
\[
\mathcal{U}: \mathcal{D} \to {\rm comp}[\mathcal{T}],
\]
where ${\rm comp}[\mathcal{T}]$ is the metric space of all compact subsets of $\mathcal{T}$ endowed with the Hausdorff topology.

As shown in \cite[Section 3.1]{BreFeiHof19}, the solution mapping
\begin{equation} \label{s9}
\mathcal{U}: \mathcal{D} \subset W^{-\ell,2}(\Omega) \times
W^{-\ell,2}(\Omega; R^d) \times R \to
{\rm comp}[ \mathcal{T}_{\rm weak}]
\end{equation}
is Borel measurable.

\subsubsection{Measurability of the solution set in $\mathcal{T}_{\rm strong}$}

As the solution mapping is Borel measurable with values in compact subsets of $\mathcal{T}_{\rm weak}$, it admits the so--called Castaign representation -- a countable family of Borel measurable selections
\[
U_i : \mathcal{D} \to \mathcal{T}_{\rm weak} ,\
U_i (d) \in \mathcal{U}_i [d] \ \mbox{for any}\ d \in \mathcal{D},\
i=1,2,\dots,
\]
such that
\[
{\rm cl}_{\mathcal{T}_{\rm weak}}[ (U_i(d))_{i=1}^\infty ] =
\mathcal{U}[d] \ \mbox{for any}\ d \in \mathcal{D}.
\]

Since the sets $\mathcal{U}$ are convex, we may extend the family
$U_i$ by considering convex combinations
\[
\sum_{j=1}^m \lambda_j U_i ,\ \lambda_j \in Q^+,\ \sum_{j=1}^m \lambda_j = 1
\]
with rational coefficients. As the closures of convex sets in the weak and strong topology coincide, we infer that the augmented system is a Castaign representation with respect to the topology $\mathcal{T}_{\rm strong}$.
Thus
\begin{equation} \label{s10}
\mathcal{U}: \mathcal{D} \subset W^{-\ell,2}(\Omega) \times
W^{-\ell,2}(\Omega; R^d) \times R \to
{\rm closed}[ \mathcal{T}_{\rm strong}]
\end{equation}
is Borel measurable.

Finally, by Hess' measurability theorem \cite{Hess}, the measurability stated in \eqref{s10} is equivalent to the Borel measurability of the set valued mapping
\[
\mathcal{U}: \mathcal{D} \to {\rm closed}[{\mathcal{T}}_{\rm strong}],
\]
where ${\rm closed}[\mathcal{T}_{\rm strong}]$ is the set of all closed subsets
of $\mathcal{T}_{\rm strong}$ endowed with the (metrizable) Wijsman topology:
\begin{equation} \label{s11}
\mathcal{A}_n \toW \mathcal{A} \ \Leftrightarrow\
{\rm dist}_{\mathcal{T}_{\rm strong}}[y, \mathcal{A}_n] \to {\rm dist}_{\mathcal{T}_{\rm strong}}[y, \mathcal{A}] \ \mbox{for any}\ y \in \mathcal{T}_{\rm strong}.
\end{equation}

\section{Regularity of dissipative solutions}
\label{d}

We are ready to state and prove our main results concerning regularity of dissipative solutions. They include, in particular:
\begin{itemize}
	\item The existence of dissipative solutions with arbitrarily small Reynolds stress (energy defect).
	\item Regularity of maximal dissipative solutions, namely any dissipative solution minimal with respect to $\prec_{\rm loc}$ is an admissible
	weak solution.
\end{itemize}

\subsection{Dissipative solutions with small energy defect}
\label{sed}

We show the existence of dissipative solutions with arbitrarily small energy defect
\[
{D}_{\mathcal{E}}(\tau) = \mathcal{E}(\tau+) - \intO{ E(\vr, \vm)(\tau, \cdot) }.
\]
In view of the compatibility relation \eqref{w9}, smallness of $D_{\mathcal{E}}$ entails smallness of the norm of the Reynolds stress $\mathcal{R}$.

The result follows from two crucial properties of dissipative solutions:
\begin{itemize}
\item
{\bf Initial time regularity.} Suppose that
\begin{equation} \label{d1}
\mathcal{E}(\tau+) = \intO{ E(\vr, \vm) (\tau)  }
\ \Leftrightarrow \ D_{\mathcal{E}}(\tau+) = 0
\ \mbox{for some}\ \tau \geq 0. 	
\end{equation}
Then for any $\delta > 0$ there exists $T = T(\delta) > 0$ such that
\begin{equation} \label{d2}
0 \leq {D}_{\mathcal{E}}(t) \leq \delta \ \mbox{for any}\ \tau \leq t < \tau + T(\delta).
\end{equation}	
Indeed, as the energy is weakly l.s.c. and the total energy $\mathcal{E}$
non--increasing, we have
\begin{align}
0 \leq \limsup_{t \to \tau +} {D}_{\mathcal{E}} (t) &\leq
\limsup_{t \to \tau +} \mathcal{E}(t+) - \liminf_{t \to \tau}
\intO{ E(\vr, \vm)(t, \cdot) }\br &\leq \mathcal{E}( \tau +) - \intO{ E(\vr, \vm)(\tau, \cdot) } = 0.
\nonumber
\end{align}

\item {\bf Concatenation.}

Suppose $(\vr^1, \vm^1, \mathcal{E}^1) \in \mathcal{U}[\vr_0, \vm_0, \mathcal{E}_0]$
and $(\vr^2, \vm^2, {\mathcal{E}^2}) \in
\mathcal{U}[\vr(T, \cdot), \vm(T, \cdot), \mathcal{E}^2_T]$,
\[
\intO{ E(\vr, \vm)(T, \cdot) } \leq \mathcal{E}^2_T \leq \mathcal{E}^1(T).
\]
Then
\[
(\vr, \vm, \mathcal{E})(t, \cdot) = \left\{ \begin{array}{l}(\vr^1, \vm^1 , \mathcal{E}^1)(t, \cdot)
\ \mbox{for}\ 0 \leq t \leq T, \\ \\
(\vr^2, \vm^2 , \mathcal{E}^2)(t - T, \cdot)
\ \mbox{for}\ t > T,
\end{array} \right.
\]
belongs to the the solution set $\mathcal{U}[\vr_0, \vm_0, \mathcal{E}_0]$.
\end{itemize}	

Our main result stated below is based on an application of Zorn's lemma equivalent to the axiom of choice.

\begin{mdframed}[style=MyFrame]

\begin{Theorem}[{\bf Dissipative solutions with small defect}] \label{dT1}
Let $\delta > 0$ and the initial data
\[
(\vr_0, \vm_0, \mathcal{E}_0) \in \mathcal{D},\ \mathcal{E}_0 = \intO{ E(\vr_0, \vm_0) },
\]
be given.

Then there exists a dissipative solution
\[
(\vr, \vm, \mathcal{E} ) \in \mathcal{U}[\vr_0, \vm_0, \mathcal{E}_0]
\]
such that
\begin{equation} \label{d3}
	D_{\mathcal{E}} (t) \leq \delta \ \mbox{for all}\ t \geq 0.	
\end{equation}
\end{Theorem}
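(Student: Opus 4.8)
The plan is to run Zorn's lemma on a preorder of the solution set $\mathcal{U}[\vr_0,\vm_0,\mathcal{E}_0]$ that records how long a dissipative solution keeps its energy defect below $\delta$. For $u=(\vr,\vm,\mathcal{E})\in\mathcal{U}[\vr_0,\vm_0,\mathcal{E}_0]$ set
\[
S(u)=\sup\Big\{\,S\ge 0\ \Big|\ D_{\mathcal{E}}(t)\le\delta\ \mbox{for all}\ t\in[0,S)\,\Big\}.
\]
Since $\mathcal{E}_0=\intO{E(\vr_0,\vm_0)}$ and $0\le D_{\mathcal{E}}(0)=\mathcal{E}(0+)-\intO{E(\vr_0,\vm_0)}\le\mathcal{E}_0-\intO{E(\vr_0,\vm_0)}=0$, every $u$ has $D_{\mathcal{E}}(0)=0$, so the initial time regularity \eqref{d1}--\eqref{d2} applies and gives $S(u)>0$. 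I would then order the set by declaring $u_1\preceq u_2$ iff $S(u_1)\le S(u_2)$ and $u_1=u_2$ on $[0,S(u_1))$; this is a preorder, and one passes to the quotient identifying solutions that agree on $[0,\min\{S(u_1),S(u_2)\})$ to make it a genuine partial order. The reason for invoking Zorn rather than simply iterating \eqref{d2} is that the length $T(\delta)$ produced there depends on the solution and need not be bounded below along a chain.

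\emph{Chains have upper bounds.} Given a chain $\{u_\alpha\}$, put $\overline{S}=\sup_\alpha S(u_\alpha)$. If the supremum is attained, the corresponding solution is an upper bound; otherwise pick $u_{\alpha_n}$ with $S(u_{\alpha_n})\uparrow\overline{S}$. By total ordering these are pairwise consistent on their good intervals, hence glue to a trajectory $\bar u$ solving the field equations on $[0,\overline{S})$ with $D_{\mathcal{E}}\le\delta$ there. By the weak continuity \eqref{w6} and the monotonicity of the total energy, the weak limits $\vr(\overline{S})$, $\vm(\overline{S})$ and $\mathcal{E}(\overline{S})=\lim_{t\uparrow\overline{S}}\mathcal{E}(t)$ exist, and weak lower semicontinuity of $E$ gives $\intO{E(\vr(\overline{S}),\vm(\overline{S}))}\le\mathcal{E}(\overline{S})$, so that $\big(\vr(\overline{S}),\vm(\overline{S}),\intO{E(\vr(\overline{S}),\vm(\overline{S}))}\big)\in\mathcal{D}$. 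Using global existence I pick a dissipative solution $w$ issued from this zero--defect datum and, exactly as in the concatenation property, append it to $\bar u$ at time $\overline{S}$, keeping $\mathcal{E}$ equal to its left limit at $\overline{S}$ to respect the c\`agl\`ad requirement. A short computation gives $D_{\mathcal{E}}(\overline{S})=0$ for the resulting global dissipative solution $\tilde u$, so $D_{\mathcal{E}}\le\delta$ on $[0,\overline{S}]$; one checks $\tilde u$ dominates every $u_\alpha$.

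\emph{A maximal element is global.} By Zorn's lemma there is a maximal $u^{*}=(\vr^{*},\vm^{*},\mathcal{E}^{*})$, and I claim $S(u^{*})=\infty$, which is precisely \eqref{d3}. If not, let $\overline{S}=S(u^{*})<\infty$; its left--limit state provides, as above, a datum in $\mathcal{D}$ with vanishing energy defect. Take $w\in\mathcal{U}$ issued from it and concatenate $u^{*}$ with $w$ at time $\overline{S}$ (the concatenation property applies since $\intO{E(\vr^{*},\vm^{*})(\overline{S},\cdot)}\le\mathcal{E}^{*}(\overline{S}+)\le\mathcal{E}^{*}(\overline{S})$), obtaining $v\in\mathcal{U}[\vr_0,\vm_0,\mathcal{E}_0]$ that coincides with $u^{*}$ on $[0,\overline{S}]$. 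Then $D_{\mathcal{E}}(\overline{S})=0$ for $v$, and applying \eqref{d1}--\eqref{d2} to $w$ at its initial time yields $T''>0$ with defect $\le\delta$ on $[\overline{S},\overline{S}+T'')$; together with the bound inherited from $u^{*}$ on $[0,\overline{S})$ this gives $S(v)\ge\overline{S}+T''>\overline{S}$. Hence $u^{*}\preceq v$ but not $v\preceq u^{*}$, contradicting maximality; therefore $S(u^{*})=\infty$.

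I expect the main obstacle to be the chain step: turning the union of a chain of solutions into a \emph{bona fide} dissipative solution on $[0,\overline{S})$ and completing it past the accumulation time $\overline{S}$ while bookkeeping the c\`agl\`ad structure of $\mathcal{E}$, then verifying via the concatenation property that the completed trajectory still satisfies the momentum balance \eqref{w11} and the compatibility bound \eqref{w9}, and in particular pinning $D_{\mathcal{E}}(\overline{S})=0$ so that the defect bound holds on the \emph{closed} interval $[0,\overline{S}]$. Everything else is routine once these pieces are in place.
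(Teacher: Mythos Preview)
Your overall strategy---Zorn's lemma applied to the solution set ordered by the stopping time $S(u)$---is exactly the paper's approach, and the final contradiction argument for the maximal element is essentially identical.

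The one genuine gap is in the chain step. You glue the chain to a trajectory $\bar u$ on $[0,\overline{S})$ and then appeal to ``the concatenation property'' to append a fresh solution $w$ at time $\overline{S}$. But the concatenation property as stated (and as proved in the paper) takes as its first ingredient a \emph{global} dissipative solution in $\mathcal{U}[\vr_0,\vm_0,\mathcal{E}_0]$; your glued $\bar u$ is only a trajectory on $[0,\overline{S})$, not an element of $\mathcal{U}$, so you cannot invoke the property as a black box. You would have to verify by hand that the glued-and-extended object satisfies all the defining properties \eqref{w3}--\eqref{w12}, \eqref{w9} across the accumulation time $\overline{S}$, which is exactly the ``main obstacle'' you flag at the end and do not resolve.

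The paper sidesteps this entirely. Since $\mathcal{U}[\vr_0,\vm_0,\mathcal{E}_0]$ is \emph{compact} in $\mathcal{T}_{\rm weak}$, the increasing sequence $(u_{\alpha_n})$ already has a subsequential limit $\tilde u$ that is automatically a global dissipative solution. This limit agrees with each $u_{\alpha_n}$ on $[0,S(u_{\alpha_n}))$, hence with your $\bar u$ on $[0,\overline{S})$. Now $\tilde u$ \emph{is} an element of $\mathcal{U}$, so the concatenation property applies directly: either $T(\delta)[\tilde u]=\infty$ and $\tilde u$ is the upper bound, or one concatenates $\tilde u$ at its finite stopping time with a fresh zero-defect solution. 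No gluing, no bookkeeping of the c\`agl\`ad structure at $\overline{S}$, no direct verification of \eqref{w11} or \eqref{w9} across the accumulation time. Replacing your gluing by this compactness argument closes the gap and simplifies the proof.
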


\end{mdframed}

\begin{proof}
	
For each global solution
\[
(\vr, \vm, \mathcal{E}) \in \mathcal{U}[\vr_0, \vm_0, \mathcal{E}_0]
\]
we define a ``stopping'' time
\[
T(\delta)[\vr, \vm, \mathcal{E}] = \sup \left\{ \tau \geq 0 \ \Big|\
D_{\mathcal{E}} (t) \leq \delta \ \mbox{for any}\ 0 \leq t < \tau \right\}.
\]
We know from \eqref{d2} that $T(\delta) > 0$. Moreover,
\begin{equation} \label{d4}
D_{\mathcal{E}}(t) \leq \delta \ \mbox{for any}\ 0 \leq t < T(\delta).	
\end{equation}

Next, we introduce the order relation
\begin{align}
(\vr^1, \vm^1, \mathcal{E}^1) &\prec \hspace{-0.2cm} \prec (\vr^2, \vm^2, \mathcal{E}^2) \br &\Leftrightarrow \br
\mbox{Either}\ (\vr^1, \vm^1, \mathcal{E}^1) &= (\vr^2, \vm^2, \mathcal{E}^2),
\br
\mbox{or}\
T(\delta)[\vr^1, \vm^1, \mathcal{E}^1] &< \infty,\ T(\delta)[\vr^1, \vm^1, \mathcal{E}^1]< T(\delta) [\vr^2, \vm^2, \mathcal{E}^2], \br
&\mbox{and}\ (\vr^1, \vm^1, \mathcal{E}^1)(t) =
(\vr^2, \vm^2, \mathcal{E}^2)(t)  \ \mbox{for any}\ 0 \leq t < T(\delta)[\vr^1, \vm^1, \mathcal{E}^1].
\label{d5}
\end{align}
It is a routine matter to check that $\mathcal{U}[\vr_0, \vm_0, \mathcal{E}_0]$ endowed with the relation $\prec \hspace{-0.2cm} \prec$ is a partially ordered set
eligible for the application of Zorn's lemma.

Let
\[
(\vr^1, \vm^1, \mathcal{E}^1) \prec \hspace{-0.2cm} \prec
(\vr^2, \vm^2, \mathcal{E}^2) \prec \hspace{-0.2cm} \prec \dots
\prec \hspace{-0.2cm} \prec(\vr^n, \vm^n, \mathcal{E}^n),\
(\vr^i, \vm^i, \mathcal{E}^i) \in \mathcal{U}[\vr_0, \vm_0, \mathcal{E}_0]
\ \mbox{for}\ i=1,2, \dots
\]
be an ordered chain. Without loss of generality, we may assume $(\vr^i, \vm^i, \mathcal{E}^i) \ne
(\vr^j, \vm^j, \mathcal{E}^j)$ for $i \ne j$.
As the solution set is compact in $\mathcal{T}_{\rm weak}$, the
sequence $(\vr^n, \vm^n, \mathcal{E}^n)_{n=1}^\infty$ contains a converging subsequence (not relabeled)
\[
(\vr^n, \vm^n, \mathcal{E}^n) \to (\tvr, \tvm, \widetilde{\mathcal{E}}) \in \mathcal{U}[\vr_0, \vm_0, \mathcal{E}_0] \ \mbox{as}\ n \to \infty
\ \mbox{in} \ \mathcal{T}_{\rm weak}.
\]
In accordance with \eqref{d5},
\[
(\tvr, \tvm, \widetilde{ \mathcal{E}})(t) = (\vr^n, \vm^n, \mathcal{E}^n)(t) \ \mbox{for}\
t < T(\delta)[\vr^n, \vm^n, \mathcal{E}^n] \ \mbox{for any}\ n = 1,2,\dots.
\]

Now, either $T(\delta)(\tvr, \tvm, \widetilde{\mathcal{E}}) = \infty$, meaning
$(\tvr, \tvm, \widetilde{ \mathcal{E}})$ is an upper bound for the chain, or
$T(\delta)[\tvr, \tvm, \widetilde{\mathcal{E}}]= \widetilde{T} < \infty$. In the latter case, we consider a concatenation
\[
(\vr, \vm, \mathcal{E})(t, \cdot) = \left\{ \begin{array}{l}(\tvr, \tvm , \widetilde{\mathcal{E}})(t, \cdot)
	\ \mbox{for}\ 0 \leq t \leq \widetilde{T}, \\ \\
	(\hat{\vr}, \hat{\vm} , \hat{\mathcal{E}})(t - \widetilde{T}, \cdot)
	\ \mbox{for}\ t > \widetilde{T},
\end{array} \right.
\]
where
\[
(\hat{\vr}, \hat{\vm}, \hat{\mathcal{E}}) \in \mathcal{U}\Big(\tvr(\widetilde{T}, \cdot), \tvm (\widetilde{T}, \cdot),
\intO{ E(\tvr(\widetilde{T}, \cdot), \tvm (\widetilde{T}, \cdot) )} \Big).
\]
Applying \eqref{d1}, \eqref{d2} to $(\hat{\vr}, \hat{\vm}, \hat{\mathcal{E}})$ we can see $({\vr}, {\vm}, {\mathcal{E}})$ is an upper bound for the
chain. Thus in both cases, the ordered chain admits an upper bound in $\mathcal{U}[\vr_0, \vm_0, \mathcal{E}_0]$.
	
By Zorn's lemma, there exists a maximal solution
$(\tvr, \tvm, \widetilde{\mathcal{E}}) \in
\mathcal{U}[\vr_0, \vm_0, \mathcal{E}_0]$ with respect to the relation
$\prec \hspace{-0.2cm} \prec$. We claim
\[
T(\delta) [\tvr, \tvm, \widetilde{\mathcal{E}}] = \infty,
\]
which completes the proof. Indeed, if
\[	
T(\delta) [\tvr, \tvm, \widetilde{\mathcal{E}}] = \widetilde{T} < \infty,
\]
we could repeat the above argument concatenating the solution
$
(\tvr, \tvm, \widetilde{\mathcal{E}})
$
with another solution starting from the initial data
\[
(\hat{\vr}, \hat{\vm}, \hat{\mathcal{E}}) \in \mathcal{U}\Big(\tvr(\widetilde{T}, \cdot), \tvm (\widetilde{T}, \cdot),
\intO{ E(\tvr(\widetilde{T}, \cdot), \tvm (\widetilde{T}, \cdot) )} \Big),
\]
\[
(\vr, \vm, \mathcal{E})(t, \cdot) = \left\{ \begin{array}{l}(\tvr, \tvm , \widetilde{\mathcal{E}})(t, \cdot)
	\ \mbox{for}\ 0 \leq t \leq \widetilde{T}, \\ \\
	(\hat{\vr}, \hat{\vm} , \hat{\mathcal{E}})(t - \widetilde{T}, \cdot)
	\ \mbox{for}\ t > \widetilde{T}.
\end{array} \right.
\]
Obviously, the new solution $(\vr, \vm, \mathcal{E})$ satisfies
\[
(\tvr, \tvm, \widetilde{\mathcal{E}})
\pprec (\vr, \vm, \mathcal{E}),
\]
and, by virtue of the initial regularity property,
\[
T(\delta) [{\vr}, \vm, \mathcal{E}] \geq \widetilde{T} +
T(\delta)
[\hat{\vr}, \hat{\vm}, \hat{\mathcal{E}}] > \widetilde{T}.
\]	
Consequently, $(\tvr, \tvm, \widetilde{\mathcal{E}})$ is not maximal --
a contradiction.

	\end{proof}

Applying the same approach we can show the existence of a dissipative solution with the energy defect dominated by an arbitrary positive function, in particular, the defect may vanish for $t \to \infty$.

\begin{mdframed}[style=MyFrame]

\begin{Corollary} \label{dC1}
Let a function $\delta \in C[0, \infty)$, $\delta(t) > 0$ for any $t \geq 0$, and the
initial data 	
\[
(\vr_0, \vm_0, \mathcal{E}_0) \in \mathcal{D},\ \mathcal{E}_0 = \intO{ E(\vr_0, \vm_0) },
\]
be given.

Then there exists a dissipative solution
\[
(\vr, \vm, \mathcal{E} ) \in \mathcal{U}[\vr_0, \vm_0, \mathcal{E}_0]
\]
such that
\[
D_{\mathcal{E}}(t) \leq \delta(t) \ \mbox{for all}\ t > 0.	
\]
	
\end{Corollary}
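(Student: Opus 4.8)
The plan is to rerun the proof of Theorem \ref{dT1} almost verbatim, with the constant threshold $\delta$ replaced by the function $t \mapsto \delta(t)$. The only genuinely new point is to recover the quantitative ``initial time regularity'' estimate in a form adapted to a non-constant threshold, and for that the continuity and strict positivity of $\delta$ are precisely what is needed.

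Concretely, for each global solution $(\vr, \vm, \mathcal{E}) \in \mathcal{U}[\vr_0, \vm_0, \mathcal{E}_0]$ I would introduce the stopping time
\[
T[\vr, \vm, \mathcal{E}] = \sup \Big\{ \tau \geq 0 \ \Big|\ D_{\mathcal{E}}(t) \leq \delta(t) \ \mbox{for all}\ 0 \leq t < \tau \Big\},
\]
and establish the following local fact: whenever $D_{\mathcal{E}}(\tau+) = 0$ for some $\tau \geq 0$, there is $\eta > 0$ with $D_{\mathcal{E}}(t) \leq \delta(t)$ for all $\tau \leq t < \tau + \eta$. Indeed, continuity and positivity of $\delta$ give $\eta_1 > 0$ with $\delta(t) \geq \tfrac{1}{2}\delta(\tau) > 0$ on $[\tau, \tau + \eta_1]$, and then the initial time regularity property from the proof of Theorem \ref{dT1}, applied with the \emph{constant} threshold $\tfrac{1}{2}\delta(\tau)$, yields $\eta_2 > 0$ with $D_{\mathcal{E}}(t) \leq \tfrac{1}{2}\delta(\tau) \leq \delta(t)$ on $[\tau, \tau + \min\{\eta_1, \eta_2\})$. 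Since $\mathcal{E}_0 = \intO{ E(\vr_0, \vm_0) }$ forces $D_{\mathcal{E}}(0+) = 0$, applying this at $\tau = 0$ already gives $T[\vr, \vm, \mathcal{E}] > 0$.

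Next I would equip $\mathcal{U}[\vr_0, \vm_0, \mathcal{E}_0]$ with the strict partial order $\pprec$ defined exactly as in \eqref{d5}, but with $T[\cdot]$ the new stopping time, and verify the hypotheses of Zorn's lemma as before. For an ordered chain, compactness of the solution set in $\mathcal{T}_{\rm weak}$ produces a limit $(\tvr, \tvm, \widetilde{\mathcal{E}})$ agreeing with each chain element on its interval $[0, T[\vr^n, \vm^n, \mathcal{E}^n))$, hence with stopping time at least $\sup_n T[\vr^n, \vm^n, \mathcal{E}^n]$; if that is $+\infty$ the limit is an upper bound, and otherwise one concatenates at the finite value $\widetilde{T} := T[\tvr, \tvm, \widetilde{\mathcal{E}}]$ with any solution emanating from $\big(\tvr(\widetilde{T}, \cdot), \tvm(\widetilde{T}, \cdot), \intO{ E(\tvr, \tvm)(\widetilde{T}, \cdot) }\big)$ — which has zero energy defect at $\widetilde{T}$ — and the local fact above shows the concatenation still satisfies $D_{\mathcal{E}}(t) \leq \delta(t)$ just past $\widetilde{T}$, so it is an upper bound. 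Zorn's lemma then yields a $\pprec$-maximal solution; were its stopping time finite, concatenating it with a fresh zero-defect solution at that time would, by the same local fact, strictly increase the stopping time and produce a $\pprec$-larger element, a contradiction. Hence the maximal solution has stopping time $+\infty$, i.e. $D_{\mathcal{E}}(t) \leq \delta(t)$ for all $t \geq 0$, in particular for $t > 0$.

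I do not expect a serious obstacle: all structural ingredients — compactness of $\mathcal{U}[\vr_0, \vm_0, \mathcal{E}_0]$ in $\mathcal{T}_{\rm weak}$, the concatenation property, and the implication $D_{\mathcal{E}}(\tau+) = 0 \Rightarrow$ small defect for a positive time — are already in hand, and passing from a constant to a continuous positive $\delta$ costs only the elementary observation that such a $\delta$ is bounded below by a positive constant near each point. The single point requiring a line of care is that the stopping-time estimate survives each concatenation, which is exactly what the local fact guarantees.
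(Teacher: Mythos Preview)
Your argument is correct and uses the same machinery as the paper --- Zorn's lemma on $\mathcal{U}[\vr_0,\vm_0,\mathcal{E}_0]$ with a stopping time, compactness in $\mathcal{T}_{\rm weak}$, and concatenation with a zero-defect restart --- so there is no gap.

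The only difference is a minor packaging choice. The paper does not rerun the Zorn argument against the variable threshold $\delta(t)$; instead it observes that it suffices to produce a solution with $D_{\mathcal{E}}(t) \leq \min_{\tau \in [n,n+1]} \delta(\tau)$ for $t \in [n,n+1]$, i.e.\ it first replaces $\delta$ by a piecewise-constant positive minorant and then invokes the proof of Theorem~\ref{dT1} on each unit block. This bypasses your ``local fact'' entirely, since on each block the threshold is already constant. Your route is equally valid and makes explicit why continuity of $\delta$ matters (local lower bounds), while the paper's route is shorter because it reduces immediately to the constant-$\delta$ case already handled.
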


\end{mdframed}

\begin{proof}
Apply the arguments of the proof of Theorem \ref{dT1} to obtain
a solution $(\vr, \vm, \mathcal{E})$, with the energy defect
\[
D_{\mathcal{E}}(t) \leq \min_{\tau \in [n, n+1]} \delta (\tau) \ \mbox{for}\
t \in [n, n+1].
\]
	
\end{proof}
	
\subsection{Maximal dissipation principle}

We recall the order relation introduced in \eqref{Dafer}, namely
\begin{align} \nonumber
	(\vr^1, \vm^1, \mathcal{E}^1) &\prec_{\rm loc}
	(\vr^2, \vm^2, \mathcal{E}^2)\br &\Leftrightarrow \br
	\mbox{there exists}\ T &\geq 0 \ \mbox{such that}\
	(\vr^1, \vm^1, \mathcal{E}^1)(\tau, \cdot) =
	(\vr^2, \vm^2, \mathcal{E}^2)(\tau, \cdot) \ \mbox{for any}\ \tau \in [0,T],\br
	\mathcal{E}^1(\tau) &< \mathcal{E}^2(\tau) \ \mbox{for}\
	\tau \in (T, T+ \delta) \ \mbox{for some}\ \delta > 0.
	\nonumber
\end{align}	
Similarly to the relation $\prec \hspace{-0.2cm} \prec$ introduced in the proof of Theorem \ref{dT1},  the order relation
$\prec_{\rm loc}$ augmented formally by the identity relation represents a partial ordering of $\mathcal{U}[\vr_0, \vm_0, \mathcal{E}_0]$.
Recall that a dissipative solution is maximal (dissipative) if it is minimal with respect to $\prec_{\rm loc}$.

\subsubsection{Regularity of maximal dissipative solutions}

Our next goal is to show that any maximal dissipative solution in the sense of Definition \ref{wD4} is necessarily an admissible (with non--increasing total energy) weak solution.
The result can be seen as a rigorous verification of DiPerna's conjecture \cite[Section 6]{DiP2} in the context of the barotropic Euler system.

\begin{mdframed}[style=MyFrame]

\begin{Theorem}[{\bf Regularity of maximal dissipative solutions}] \label{dT2}
	
Suppose \[
(\vr, \vm, \mathcal{E}) \in \mathcal{U}[\vr_0, \vm_0, \mathcal{E}_0]
\]
is a maximal
dissipative solution.

Then the energy defect vanishes,
\[
D_{\mathcal{E}}(t) = \mathcal{E}(t+) - \intO{ E(\vr, \vm) (t) }
 = 0
\]
for any $t \geq 0$.
In particular, $(\vr, \vm)$ is a weak solution of the Euler system (with a non--increasing total energy profile).
\end{Theorem}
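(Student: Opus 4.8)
The strategy is a proof by contradiction that exploits the two structural tools developed in Section~\ref{sed}: the \emph{initial time regularity} property~\eqref{d1}--\eqref{d2} and the \emph{concatenation} property, combined with the existence of a small-defect solution supplied by Theorem~\ref{dT1}. Suppose that $(\vr, \vm, \mathcal{E}) \in \mathcal{U}[\vr_0, \vm_0, \mathcal{E}_0]$ is maximal dissipative but that its energy defect does not vanish identically. Define
\[
T_\star = \inf \left\{ t \geq 0 \ \Big| \ D_{\mathcal{E}}(t) > 0 \right\}.
\]
We first argue that on $[0, T_\star]$ the energy defect vanishes, so that $(\vr, \vm)$ is a classical/weak solution there with $\mathcal{R} = 0$ and $\mathcal{E}(t) = \intO{E(\vr, \vm)(t, \cdot)}$; the candidate competitor we shall build will agree with $(\vr, \vm, \mathcal{E})$ on $[0, T_\star]$ and then dissipate strictly more energy on a short interval to its right, contradicting minimality with respect to $\prec_{\rm loc}$.

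The heart of the matter is to construct, starting from the data $(\vr(T_\star, \cdot), \vm(T_\star, \cdot), \mathcal{E}(T_\star+))$ at time $T_\star$, a dissipative solution whose total energy is \emph{strictly} below $\mathcal{E}(\cdot)$ immediately after $T_\star$. Here we invoke Theorem~\ref{dT1} (or Corollary~\ref{dC1}) at time $T_\star$ with a small parameter $\delta > 0$: since $D_{\mathcal{E}}(T_\star+) = 0$, the compatibility relation~\eqref{E0} holds at $T_\star$, so Theorem~\ref{dT1} furnishes a solution $(\hat{\vr}, \hat{\vm}, \hat{\mathcal{E}})$ emanating from that data with $D_{\hat{\mathcal{E}}}(t) \leq \delta$ for all $t \geq 0$; in particular $\hat{\mathcal{E}}(t) \leq \intO{E(\hat{\vr}, \hat{\vm})(t, \cdot)} + \delta$. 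One then concatenates $(\vr, \vm, \mathcal{E})|_{[0,T_\star]}$ with $(\hat{\vr}, \hat{\vm}, \hat{\mathcal{E}})$ shifted to start at $T_\star$ — this is an element of $\mathcal{U}[\vr_0, \vm_0, \mathcal{E}_0]$ by the concatenation property. To conclude $\prec_{\rm loc}$-domination we need $\hat{\mathcal{E}}(t) < \mathcal{E}(T_\star + t)$ for $t$ in a right neighbourhood of $0$. Since $D_{\mathcal{E}}(t) > 0$ for $t$ slightly above $T_\star$ by the definition of $T_\star$ and the càglàd monotonicity of $\mathcal{E}$, we have $\mathcal{E}(T_\star + t) \geq \intO{E(\vr, \vm)(T_\star + t, \cdot)} + D_{\mathcal{E}}(T_\star + t)$, whereas $\hat{\mathcal{E}}$ is pinned close to its own mean energy; the two mean energies coincide at $t = 0$ and a continuity/weak-lower-semicontinuity argument for the mean energy (as already used to derive~\eqref{d2}) shows the competitor wins on a short interval.

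The main obstacle — and the point requiring care — is the last comparison: making rigorous that the concatenated competitor has $\mathcal{E}^{\rm new}(\tau) < \mathcal{E}(\tau)$ on a genuine interval $(T_\star, T_\star + \eta)$ rather than merely at a sequence of times. Two subtleties feed into this. First, $D_{\mathcal{E}}(t) > 0$ is only known for \emph{a.a.} $t$ near $T_\star$ and one must pass to the non-increasing càglàd representative of $\mathcal{E}$ to get a pointwise statement; here the fact that $T_\star$ is \emph{not} a point of right-continuity of the defect (since $D_{\mathcal{E}}(T_\star+) = 0$ but $D_{\mathcal{E}}$ jumps up) must be handled — one may have to shift $T_\star$ slightly to a point where the defect is already bounded below by a fixed amount on a full interval, then choose $\delta$ smaller than that amount. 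Second, one must control the mean-energy term $\intO{E(\hat{\vr},\hat{\vm})(t,\cdot)}$ versus $\intO{E(\vr,\vm)(T_\star+t,\cdot)}$ near $t=0$; since both $(\hat{\vr},\hat{\vm})$ and $(\vr,\vm)$ emanate (weakly continuously) from the \emph{same} state and the energy is weakly lower semicontinuous, one gets $\liminf_{t\to 0+}\intO{E(\hat{\vr},\hat{\vm})(t,\cdot)} \geq \intO{E(\vr,\vm)(T_\star,\cdot)} = \mathcal{E}(T_\star+)$ from below but needs a matching upper bound on $\mathcal{E}$ near $T_\star$ — this is exactly where the strict positivity of $D_{\mathcal{E}}$ to the right of $T_\star$ provides the necessary gap, since it forces $\mathcal{E}(T_\star + t)$ to stay a fixed distance above the mean energy while $\hat{\mathcal{E}}$ stays within $\delta$ of it. Once $\delta$ is chosen below that gap, the strict inequality holds on an interval, $(\vr^{\rm new},\vm^{\rm new},\mathcal{E}^{\rm new}) \prec_{\rm loc} (\vr, \vm, \mathcal{E})$, contradicting maximality. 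Hence $D_{\mathcal{E}} \equiv 0$, and then by the compatibility between~\eqref{w9} and~\eqref{w8} the Reynolds stress vanishes and $(\vr, \vm)$ is an admissible weak solution.
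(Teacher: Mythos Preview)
Your plan contains the right mechanism (build a competitor by concatenation) but takes an unnecessarily circuitous route and, as written, leaves the central comparison unresolved. The paper's proof is a three-line direct argument that sidesteps every difficulty you flag.

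The key simplification you miss: there is no need for the infimum $T_\star$, no need for Theorem~\ref{dT1}, and no need to compare the mean energies of two different trajectories. One simply picks \emph{any} $T\ge 0$ at which $D_{\mathcal E}(T)=\ep>0$. The concatenation property allows the competitor to be restarted at $T$ with initial total energy equal to the \emph{mean} energy $\intO{E(\vr,\vm)(T,\cdot)}=\mathcal E(T+)-\ep$ (not $\mathcal E(T+)$); any dissipative solution $(\hat\vr,\hat\vm,\hat{\mathcal E})$ from that data exists by global existence, and since $\hat{\mathcal E}$ is non-increasing one has $\hat{\mathcal E}(t)\le \mathcal E(T+)-\ep$ for all $t\ge 0$. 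On the other hand, as $\mathcal E$ is non-increasing and $\mathcal E(T+)=\lim_{s\to T+}\mathcal E(s)$, there is $\delta>0$ with $\mathcal E(t)>\mathcal E(T+)-\ep/2$ for $t\in(T,T+\delta)$. The concatenated solution therefore agrees with the original on $[0,T]$ and is strictly below it on $(T,T+\delta)$, contradicting minimality for $\prec_{\rm loc}$.

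Your primary line---work at $T_\star$ where you claim $D_{\mathcal E}(T_\star)=0$, start the competitor with total energy $\mathcal E(T_\star+)$, and invoke Theorem~\ref{dT1}---has a genuine obstruction: when the defect vanishes at the restart time, the competitor's initial total energy equals $\mathcal E(T_\star+)$, so monotonicity alone gives no gap. You are then forced into comparing $\intO{E(\hat\vr,\hat\vm)(t,\cdot)}$ with $\intO{E(\vr,\vm)(T_\star+t,\cdot)}$ for two \emph{different} solutions, and for this only weak lower semicontinuity is available, not the upper bound you would need. Your proposed remedy---shift to a nearby point where the defect is bounded below---does work, but once you stand at a point with positive defect, the restart-with-mean-energy trick above applies immediately and Theorem~\ref{dT1} is superfluous. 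In short, the detour through $T_\star$ and small-defect solutions buys nothing and creates the very gap you then have to patch.
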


\end{mdframed}

\begin{proof}
	
Suppose
\[
D_{\mathcal{E}}(T) = \mathcal{E}(T+) - \intO{ E(\vr, \vm) (T) } = \ep > 0
\]	
for some $T \geq 0$. As $\mathcal{E}$ is non--increasing, there exists $\delta > 0$ such that
\begin{equation} \label{d7}
\mathcal{E}(t) > \mathcal{E}(T+) - \frac{\ep}{2} \ \mbox{for any}\ t \in (T, T + \delta).
\end{equation} 	

Next, by concatenation, we may construct a new solution of the same problem such that
\[
(\tvr, \tvm, \widetilde{\mathcal{E}})(t, \cdot) = \left\{ \begin{array}{l}(\vr, \vm , {\mathcal{E}})(t, \cdot)
	\ \mbox{for}\ 0 \leq t \leq T, \\ \\
	(\hat{\vr}, \hat{\vm} , \hat{\mathcal{E}})(t - \widetilde{T}, \cdot)
	\ \mbox{for}\ t > {T},
\end{array} \right.
\]
where
\[
(\hat{\vr}, \hat{\vm}, \hat{\mathcal{E}}) \in \mathcal{U}\Big(\vr({T}, \cdot), \vm ({T}, \cdot),
\intO{ E(\vr({T}, \cdot), \vm ({T}, \cdot) )} \Big).
\]
Obviously,
\[
(\tvr, \tvm, \widetilde{\mathcal{E}})(t, \cdot) =
(\vr, \vm, \mathcal{E})(t, \cdot) \ \mbox{for all}\ t \in [0,T],
\]
while, by virtue of \eqref{d7},
\[
\widetilde{\mathcal{E}} (t) \leq \intO{ E(\vr({T}, \cdot), \vm ({T}, \cdot)) }
= \mathcal{E}(T+) - \ep < \mathcal{E}(t)
\]
for any $t \in (T, T+ \delta)$.
We conclude that $(\vr, \vm, \mathcal{E})$ is not minimal.
	
	\end{proof}

As we shall see in Section \ref{ss}, the existence of admissible dissipative solutions, meaning solutions minimal with respect
to the order $\prec$ is guaranteed for any finite energy data. Unfortunately, however, the existence of
maximal dissipative solutions, meaning minimal with respect
to the order $\prec_{\rm loc}$ remains an open problem.

\section{Semigroup (semiflow) selection}
\label{ss}

Before discussing any selection process, we define two operations on the
trajectory space $\mathcal{T}_{\rm weak}$:
\begin{itemize}
	\item {\bf Time shift.} For any $T \geq 0$ and any
	$(\vr, \vm, \mathcal{E}) \in \mathcal{T}_{\rm weak}$, we define
	\[
	\mathcal{S}_T[ 	\vr, \vm, \mathcal{E} ](t, \cdot) =
	(\vr (t + T), \vm (t +T), \mathcal{E}(t + T) ),\ t \geq 0.
	\]
	
	\item{\bf Concatenation.} 	
	For
	\[
	(\vr^i, \vm^i, \mathcal{E}^i) \in \mathcal{T}_{\rm weak}, \ i=1,2,\ T > 0.
	\]
	we define
	\[
	(\vr^1, \vm^1, \mathcal{E}^1) \cup_T
	(\vr^2, \vm^2, \mathcal{E}^2) = (\vr, \vm, \mathcal{E}),
	\]
	\[
	(\vr, \vm, \mathcal{E}) = \left\{
	\begin{array}{l}
		(\vr^1, \vm^1, \mathcal{E}^1) (t, \cdot) \ \mbox{for}\ 0 \leq t \leq T, \\ \\
		(\vr^2, \vm^2, \mathcal{E}^2) (t - T, \cdot) \ \mbox{for}\ t > T	
	\end{array} \right.
	\]
	Note that $(\vr^1, \vm^1, \mathcal{E}^1) \cup_T (\vr^2, \vm^2, \mathcal{E}^2)$
	belongs to $\mathcal{T}_{\rm weak}$ provided
	\[
	(\vr^2(0, \cdot), \vm^2(0, \cdot)) = (\vr^1(T, \cdot), \vm^1(T, \cdot)).
	\]
	
\end{itemize}

\subsection{Properties of the solution sets}

We recall the basic properties of the solution sets $\mathcal{U}$ established in Section \ref{s}.

\begin{itemize}
	\item {\bf [A1]} For any data $(\vr_0, \vm_0, \mathcal{E}_0) \in \mathcal{D}$, the solution set
\[
\mathcal{U}	[\vr_0, \vm_0, \mathcal{E}_0] \subset
\mathcal{T}_{\rm weak} \cap \mathcal{T}_{\rm strong}
\]
is a non--empty convex set, compact in $\mathcal{T}_{\rm weak}$ and closed bounded in $\mathcal{T}_{\rm strong}$.

\item {\bf [A2]} If
\[
(\vr, \vm, \mathcal{E}) \in \mathcal{U}[\vr_0, \vm_0, \mathcal{E}_0],
\]
than the time shift satisfies
\[
\mathcal{S}_T [\vr, \vm, \mathcal{E}]
\in \mathcal{U}[ \vr(T, \cdot),
\vm (T, \cdot), \mathcal{E}(T)    ]
\]
for any $T \geq 0$.

\item {\bf [A3]} If
\[
(\vr^1, \vm^1, \mathcal{E}^1) \in \mathcal{U}[\vr_0, \vm_0, \mathcal{E}_0]
\]
and
\[
(\vr^2, \vm^2, \mathcal{E}^2) \in \mathcal{U}[\vr^1(T, \cdot), \vm^1(T, \cdot), \mathcal{E}^2_0], \ \mbox{where}\
\intO{ E(\vr^1, \vm^1)(T, \cdot) } \leq \mathcal{E}^2_0 \leq
\mathcal{E}^1(T),
\]
then
\[
(\vr^1, \vm^1, \mathcal{E}^1) \cup_T (\vr^2, \vm^2, \mathcal{E}^2) \in
\mathcal{U}[\vr_0, \vm_0, \mathcal{E}_0].
\]

\item {\bf [A4]}
The set valued mappings
\[
(\vr_0, \vm_0, \mathcal{E}_0) \in \mathcal{D}
\mapsto \mathcal{U}[\vr_0, \vm_0, \mathcal{E}_0] \in {\rm comp}[\mathcal{T}_{\rm weak}]
\]
and
\[
(\vr_0, \vm_0, \mathcal{E}_0) \in \mathcal{D}
\mapsto \mathcal{U}[\vr_0, \vm_0, \mathcal{E}_0] \in {\rm closed}[ \mathcal{T}_{\rm strong}]
\]
are Borel measurable with respect to the Hausdorff and the Wijsman topology, respectively.
	
\end{itemize}

\subsection{Selection process}

We consider a functional of the form
\[
\mathcal{F}(\vr, \vm, \mathcal{E}) = \int_0^\infty \exp(-t)
F(\vr(t, \cdot), \vm(t, \cdot), \mathcal{E}(t, \cdot)) \dt,
\]
where
\[
F: \mathcal{D} \to R
\]
is a convex l.s.c. function.

For each solution set
\[
\mathcal{U}[\vr_0, \vm_0, \mathcal{E}_0],
\]
let
\begin{align}
\mathcal{U}_{\mathcal{F}}[\vr_0, \vm_0, \mathcal{E}_0] &=
\left\{ (\tvr, \tvm, \widetilde{E}) \in
\mathcal{U}[\vr_0, \vm_0, \mathcal{E}_0]
\ \Big| \ \mathcal{F} (\tvr, \tvm, \widetilde{\mathcal{E}}) \leq \mathcal{F} (\vr, \vm, \mathcal{E}) \right. \br  &\mbox{for any}\  (\vr, \vm, \mathcal{E}) \in
\mathcal{U}[\vr_0, \vm_0, \mathcal{E}_0]
\Big\}
\nonumber
\end{align}
be the set of minimizers of $\mathcal{F}$ on $\mathcal{U}[\vr_0, \vm_0, \mathcal{E}_0]$.

It is easy to see that the restricted solution set
$\mathcal{U}_{\mathcal{F}}[\vr_0, \vm_0, \mathcal{E}_0]$
is again non--empty and convex. Next, we show that
the properties {\bf [A2]}, {\bf [A3]} are satisfied. As a matter of fact, the arguments are almost identical to those of  \cite[Section 5]{BreFeiHof19}.
As the present setting is slightly more general, we reproduce the proof for reader's convenience.

\begin{Lemma} \label{SL1}
\begin{itemize}
	\item {\bf Shift property.}
\[
(\vr, \vm, \mathcal{E}) \in \mathcal{U}_{\mathcal{F}}[\vr_0, \vm_0, \mathcal{E}_0] \ \Rightarrow\ \mathcal{S}_T (\vr, \vm, \mathcal{E})
\in \mathcal{U}_{\mathcal{F}}[ \vr(T, \cdot),
\vm (T, \cdot), \mathcal{E}(T)]
\]	
	\item {\bf Concatenation property.}
	
	\[
	(\vr^1, \vm^1, \mathcal{E}^1) \in \mathcal{U}_{\mathcal{F}}[\vr_0, \vm_0, \mathcal{E}_0]
	\]
	and
	\[
	(\vr^2, \vm^2, \mathcal{E}^2) \in \mathcal{U}_{\mathcal{F}}[\vr^1(T, \cdot), \vm^1(T, \cdot), \mathcal{E}^2_0], \ \mbox{where}\
	\intO{ E(\vr^1, \vm^1)(T, \cdot) } \leq \mathcal{E}^2_0 \leq
	\mathcal{E}^1(T),
	\]
	\centerline{$\Rightarrow$}
	\[
	(\vr^1, \vm^1, \mathcal{E}^1) \cup_T (\vr^2, \vm^2, \mathcal{E}^2) \in
	\mathcal{U}_{\mathcal{F}}[\vr_0, \vm_0, \mathcal{E}_0].
	\]
	
\end{itemize}		
	
	\end{Lemma}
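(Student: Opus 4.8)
The plan is to lean on the exponential weight, which makes $\mathcal{F}$ additive under splitting of the time axis: for every trio $(\vr, \vm, \mathcal{E}) \in \mathcal{T}_{\rm weak}$ and every $T \geq 0$, the substitution $t = T + s$ in the tail of the integral gives the identity
\[
\mathcal{F}(\vr, \vm, \mathcal{E}) = \int_0^T \exp(-t) F(\vr, \vm, \mathcal{E})(t) \dt + \exp(-T) \mathcal{F}\big( \mathcal{S}_T[\vr, \vm, \mathcal{E}] \big),
\]
which I will use throughout. As a preliminary I note that $\mathcal{F}$ is sequentially lower semicontinuous on the $\mathcal{T}_{\rm weak}$-compact set $\mathcal{U}[\vr_0, \vm_0, \mathcal{E}_0]$ and bounded below there, so the minimum defining $\mathcal{U}_{\mathcal{F}}$ is attained, and convexity of $\mathcal{U}_{\mathcal{F}}$ follows from convexity of $\mathcal{F}$ and of the solution set.

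\emph{Shift property.} I would argue by contradiction. Let $(\vr, \vm, \mathcal{E}) \in \mathcal{U}_{\mathcal{F}}[\vr_0, \vm_0, \mathcal{E}_0]$; by {\bf [A2]} its time shift lies in $\mathcal{U}[\vr(T, \cdot), \vm(T, \cdot), \mathcal{E}(T)]$. Were it not a minimizer of $\mathcal{F}$ there, I would take a strictly better competitor $(\vr^*, \vm^*, \mathcal{E}^*)$ on that set. Since $\intO{E(\vr, \vm)(T, \cdot)} \leq \mathcal{E}(T+) \leq \mathcal{E}(T)$, assumption {\bf [A3]} applies with $\mathcal{E}^2_0 = \mathcal{E}(T)$, so $(\vr, \vm, \mathcal{E}) \cup_T (\vr^*, \vm^*, \mathcal{E}^*) \in \mathcal{U}[\vr_0, \vm_0, \mathcal{E}_0]$. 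Applying the decomposition identity to this concatenation and to $(\vr, \vm, \mathcal{E})$ — their first terms coincide — gives
\[
\mathcal{F}\big( (\vr, \vm, \mathcal{E}) \cup_T (\vr^*, \vm^*, \mathcal{E}^*) \big) = \int_0^T \exp(-t) F(\vr, \vm, \mathcal{E})(t) \dt + \exp(-T) \mathcal{F}(\vr^*, \vm^*, \mathcal{E}^*) < \mathcal{F}(\vr, \vm, \mathcal{E}),
\]
contradicting minimality. Hence the shift belongs to $\mathcal{U}_{\mathcal{F}}[\vr(T, \cdot), \vm(T, \cdot), \mathcal{E}(T)]$.

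\emph{Concatenation property.} Write $u^i = (\vr^i, \vm^i, \mathcal{E}^i)$ and $w = u^1 \cup_T u^2$. By {\bf [A3]}, $w \in \mathcal{U}[\vr_0, \vm_0, \mathcal{E}_0]$, so it remains only to verify $\mathcal{F}(w) = \mathcal{F}(u^1)$, the minimal value. Since $u^1$ and $w$ share the same first term in the decomposition identity, this is equivalent to $\mathcal{F}(u^2) = \mathcal{F}(\mathcal{S}_T u^1)$. The inequality ``$\geq$'' is free, because $w$ is a competitor on $\mathcal{U}[\vr_0, \vm_0, \mathcal{E}_0]$ and $u^1$ a minimizer. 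For ``$\leq$'', the shift property identifies $\mathcal{S}_T u^1$ as a minimizer of $\mathcal{F}$ over $\mathcal{U}[\vr^1(T, \cdot), \vm^1(T, \cdot), \mathcal{E}^1(T)]$, whereas $u^2$ minimizes $\mathcal{F}$ over $\mathcal{U}[\vr^1(T, \cdot), \vm^1(T, \cdot), \mathcal{E}^2_0]$; since $\mathcal{E}^2_0$ lies in the admissible window $\big[ \intO{E(\vr^1, \vm^1)(T, \cdot)},\ \mathcal{E}^1(T) \big]$, the idea is to lower the initial value of the total energy of $\mathcal{S}_T u^1$ from $\mathcal{E}^1(T)$ to $\mathcal{E}^2_0$, producing an admissible competitor for the $\mathcal{E}^2_0$-problem of cost no larger than $\mathcal{F}(\mathcal{S}_T u^1)$, whence $\mathcal{F}(u^2) \leq \mathcal{F}(\mathcal{S}_T u^1)$.

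I expect the genuine obstacle to be precisely this last energy-lowering step: one must check that cutting the total energy down to $\mathcal{E}^2_0$ keeps the modified trio a dissipative solution — the energy must stay non-increasing and c\`agl\`ad and, most delicately, the compatibility inequality \eqref{w9} between the energy defect and the Reynolds stress must be preserved — and that the value of $\mathcal{F}$ does not increase under this modification. Everything else is the bookkeeping above, which rests only on the decomposition identity and on properties {\bf [A2]}--{\bf [A3]}.
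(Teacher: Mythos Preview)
Your argument is essentially the paper's: the same exponential splitting identity, the same reliance on {\bf [A2]}--{\bf [A3]}, and your contradiction for the shift property is just the contrapositive of the paper's direct chain of inequalities. For the concatenation property, the paper likewise reduces everything to the single inequality $\mathcal{F}(u^2)\le\mathcal{F}(\mathcal S_T u^1)$ and simply asserts it in one line, with no energy-lowering step at all; the intended justification is that $\mathcal S_T u^1$ is itself an admissible competitor in the minimization problem over which $u^2$ is optimal (this is immediate when $\mathcal E^2_0=\mathcal E^1(T)$, and the paper does not spell out anything further for the strict case).

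The energy-lowering manoeuvre you propose is therefore a detour the paper does not take, and your instinct that it is delicate is entirely correct: forcing $\mathcal S_T u^1$ into $\mathcal{U}[\vr^1(T,\cdot),\vm^1(T,\cdot),\mathcal E^2_0]$ by truncating the total energy can destroy the monotonicity of $\mathcal E$ or the compatibility bound \eqref{w9}, and even when it does not, concluding that $\mathcal{F}$ does not increase would need $F$ to be non-decreasing in $\mathcal E$, which the lemma does not assume. So the ``obstacle'' you flag is a complication of your own route rather than a missing idea in the paper's argument; drop the lowering step and compare $u^2$ directly with $\mathcal S_T u^1$, as the paper does.
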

	
\begin{proof}
	
The proof is basically the same as in Cardona, Kapintanski \cite{CorKap} or \cite{BreFeiHof19}. We present is here for reader's convenience.
	
\noindent	
{\bf Shift property.}

First, by the continuation property,
\[
(\vr^T, \vm^T, \mathcal{E}^T)\in \mathcal{U}_{\mathcal{F}}[\vr(T), \vm(T), \mathcal{E}(T)] \]
\[
\Rightarrow \ (\vr, \vm, \mathcal{E}) \cup_T[\vr^T, \vm^T, \mathcal{E}^T]\in \mathcal{U}[\vr_0, \vm_0, E_0].
\]
Now
\begin{align}
\mathcal{F}(S_T (\vr, \vm, \mathcal{E})) &= \int_0^\infty \exp(-t)
F((\vr, \vm, \mathcal{E})(t+T)) \dt  \br
&= \exp(T) \left( \mathcal{F}((\vr, \vm, \mathcal{E})) -
\int_0^T \exp(-t) F((\vr, \vm, \mathcal{E})(t)) \dt \right) \br
&\leq \exp(T) \left( \mathcal{F}((\vr, \vm, \mathcal{E}) \cup_T
(\vr^T, \vm^T, \mathcal{E}^T)) -
\int_0^T \exp(-t) F((\vr, \vm, \mathcal{E})(t)) \dt \right) \br
&= \exp(T) \int_T^\infty \exp(-t) F((\vr^T, \vm^T, \mathcal{E}^T)(t - T)) \dt
= \mathcal{F}(\vr^T, \vm^T, \mathcal{E}^T ).
\nonumber
\end{align}

\noindent
{\bf Concatenation property.}	

Compute
\begin{align}
&\mathcal{F}\Big( (\vr^1, \vm^1, \mathcal{E}^1)\cup_T (\vr^2, \vm^2, \mathcal{E}^2) \Big) \br
&\quad=\int_0^T \exp (-t) F((\vr^1, \vm^1, \mathcal{E}^1)(t))\dt+\int_T^\infty \exp(-t) F((\vr^2, \vm^2, \mathcal{E}^2)(t-T))\dt\br
&\quad=\int_0^T \exp(-t) F((\vr^1, \vm^1, \mathcal{E}^1)(t))\dt+ \exp(-T) \mathcal{F}(\vr^2, \vm^2, \mathcal{E}^2)\br
&\quad \leq \int_0^T \exp(-t) F((\vr^1, \vm^1, \mathcal{E}^1)(t))\dt+ \exp(-T) \mathcal{F}(S_T(\vr^1, \vm^1, \mathcal{E}^1))\br
&\quad= \mathcal{F}(\vr^1, \vm^1, \mathcal{E}^1).	
	\nonumber
\end{align}	
	
\end{proof}

\subsection{First selection criterion}
\label{F}

The first selection criterion consists in replacing the set
\[
\mathcal{U}[\vr_0, \vm_0, \mathcal{E}_0]
\]
by the set of minimizers of the functional
\begin{equation} \label{F1}
\mathcal{F}_1(\vr, \vm, \mathcal{E}) = \int_0^\infty \exp(-t) \mathcal{E}(t) \dt
\end{equation}
considering the topology of the space $\mathcal{T}_{\rm weak}$. Note that
$\mathcal{F}_1(\vr, \vm, \mathcal{E})$ is a bounded (continuous) linear form
on both $\mathcal{T}_{\rm weak}$ and $\mathcal{T}_{\rm strong}$.

Thus we set
\begin{align}
	\mathcal{U}_{\mathcal{F}_1}[\vr_0, \vm_0, \mathcal{E}_0]
&= \left\{ (\tvr, \tvm, \tvmE) \in \mathcal{U}[\vr_0, \vm_0, \mathcal{E}_0]  \ \Big| \
\int_0^\infty \exp(-t) \mathcal{E}(t) \dt \leq
\int_0^\infty \exp(-t) \widetilde{\mathcal{E}}(t) \dt \right. \br
&\left.
\quad \quad \mbox{for all}\ (\tvr, \tvm, \widetilde{\mathcal{E}}) \in
	\mathcal{U}[\vr_0, \vm_0, \mathcal{E}_0] 	\right\} \br
	&\equiv {\arg \min}_{\mathcal{U}[\vr_0, \vm_0, \mathcal{E}_0]} \mathcal{F}_1(\vr, \vm, \mathcal{E}).
\label{F2}
\end{align}
The resulting solution sets are again non--empty, convex and compact in $\mathcal{T}_{\rm weak}$. Moreover, as shown in \cite{BreFeiHof19}, the mapping
\[
\mathcal{A} \subset \mathcal{T}_{\rm weak} \mapsto
\left\{
(\vr, \vm, \mathcal{E}) \in \mathcal{A} \ \Big| \
(\vr, \vm, \mathcal{E}) = \arg \min_{\mathcal{A}} \mathcal{F}_1(\vr, \vm, \mathcal{E}) \subset \mathcal{A}
\right\}
\]
is continuous in ${\rm comp}[\mathcal{T}_{\rm weak}]$
endowed with the Hausdorff topology.

It is worth noting that we minimize
\[
\int_0^\infty \exp(-t) \mathcal{E}(t) \dt =
\int_0^\infty \exp(-t) \left[ {\mathcal{E}}(t+) - \intO{ E(\vr, \vm) (t. \cdot)}\right] +
\intO{ E(\vr, \vm)(t, \cdot)}
\dt,
\]
where
\[
D_{\mathcal{U}}(t)
{\mathcal{E}}(t+) - \intO{ E(\vr, \vm)(t ,\cdot)}  \geq 0
\]
is the energy defect, and
\[
\intO{ E(\vr, \vm)(t, \cdot) }
\]
the ``real'' energy of the fluid flow. Moreover, as observed in \cite[Section 5]{BreFeiHof19}, all selected solutions are minimal
with respect to the relation $\prec$, meaning admissible.

\subsection{Second selection criterion}
\label{S}

The next step is to minimize
\begin{equation} \label{FF1}
\mathcal{F}_2(\vr, \vm, \mathcal{E}) =
\int_0^\infty \exp(-t) \left[ \| \vr \|^q_{L^q(\Omega)} +
\| \vm \|^q_{L^q(\Omega; R^d)} + |\mathcal{E}|^q \right] \dt
\end{equation}
on $\mathcal{U}_{\mathcal{F}_1}[\vr_0, \vm_0, \mathcal{E}_0]$,
where $1 < q \leq \frac{2 \gamma}{\gamma + 1}$.
As the functional is strictly convex, there is a unique minimum attained on  any closed convex subset of $\mathcal{T}_{\rm strong}$. The selected
triple $(\vr, \vm, \mathcal{E})$ represents a semigroup selection for the compressible Euler system.

In order to show Borel measurability of the data-to-solution mapping, it is enough to show continuity of the mapping
\[
\mathcal{A} \in {\rm closed}[\mathcal{T}_{\rm strong}] \mapsto \arg \min_{\mathcal{A}} F_2 (\vr, \vm, \mathcal{E})
\in \mathcal{T}_{\rm strong}
\]
for $\mathcal{A}$ a closed convex subset belonging ${\rm closed}[\mathcal{T}_{\rm strong}]$ endowed with the Wijsman topology.

The space $\mathcal{T}_{\rm strong}$ is a reflexive, separable, uniformly convex Banach space with a uniformly convex dual. Consequently, the convergence in the Wijsman topology is equivalent to convergence in the Mosco topology $\mathcal{M}$, see \cite[Sonntag-Attouch Theorem in Section 4]{Beer1}.
We recall that
\[
\mathcal{A}_n \toMo \mathcal{A}
\]
if the two following properties hold:
\begin{itemize}
	\item For any $(\vr, \vm, \mathcal{E}) \in \mathcal{A}$, there exists
	$(\vr_n, \vm_n, \mathcal{E}_n) \in \mathcal{A}_n$ such that
\[
(\vr_n, \vm_n, \mathcal{E}_n) \to (\vr, \vm, \mathcal{E}) \ \mbox{in}\ \mathcal{T}_{\rm strong};
\]	
\item If
\[
(\vr_{n_k}, \vm_{n_k}, \mathcal{E}_{n_k}) \in \mathcal{A}_{n_k} \to (\vr, \vm, \mathcal{E})
\ \mbox{weakly in}\ \mathcal{T}_{\rm strong}
\]
then
\[
(\vr, \vm, \mathcal{E}) \in \mathcal{A}.
\]
\end{itemize}	
	
These conditions guarantee
\[
\arg \min_{(\vr, \vm, \mathcal{E}) \in \mathcal{A}_n} \mathcal{F}_2 (\vr, \vm, \mathcal{E}) \to \arg \min_{(\vr, \vm, \mathcal{E}) \in \mathcal{A}} \mathcal{F}_2 (\vr, \vm, \mathcal{E})
\]	
whenever
\[
\mathcal{A}_n \toMo \mathcal{A},\ \mathcal{A}_n, \mathcal{A}
\ \mbox{closed convex subsets of}\ \mathcal{T}_{\rm strong}.
\]

The second selection is therefore Borel measurable mapping from
$\mathcal{D}$ to $\mathcal{T}_{\rm strong}$.

\subsubsection{Minimizing the momentum}

Alternatively, we can minimize only the momentum in the second step:
\begin{equation} \label{FF2}
\mathcal{F}_2(\vr, \vm, \mathcal{E}) = \int_0^\infty \exp(-t)
\|\vm (t, \cdot) \|^q_{L^q(\Omega; R^d)} \ \dt
\end{equation}
considered as a continuous convex functional on $\mathcal{T}_{\rm strong}$.

The only difference in comparison with the previous section is the fact that
\[
\arg \min_{\mathcal{A}} \mathcal{F}_2
\]
is now a set (not a single point) in $\mathcal{T}_{\rm weak}$.
Accordingly, the Borel measurability will follow, as soon as we show that the multivalued mapping
\[
\arg \min_{\mathcal{A}} \mathcal{F}_2 : {\rm convex}[\mathcal{T}_{\rm strong}] \to 2^{\mathcal{T}_{\rm weak}}
\]
has a closed graph, where the space
\[
{\rm convex}[\mathcal{T}_{\rm strong}]
\]
is endowed with the metrizable Mosco topology.

Suppose
\[
\mathcal{A}_n \toMo \mathcal{A} \ \mbox{and}\
(\vr_n, \vm_n, \mathcal{E}) \in \arg \min_{\mathcal{A}_n} \mathcal{F}_2
\to (\vr, \vm, \mathcal{E}) \in \mathcal{T}_{\rm weak}.
\]
On the one hand,
it follows from convexity and continuity of $\mathcal{F}_2$ that
\[
\mathcal{F}_2 (\vr, \vm, \mathcal{E}) \leq \liminf_{n \to \infty} \mathcal{F}_2 (\vrn, \vm_n, \mathcal{E}_n).
\]
On the other hand, since $\mathcal{A}_n \toMo \mathcal{A}$, we deduce
\[
\liminf_{n \to \infty} \mathcal{F}_2 (\vrn, \vm_n, \mathcal{E}_n) \leq
\min_{\mathcal{A}} \mathcal{F}_2.
\]
This yields the desired conclusion
\[
(\vr, \vm, \mathcal{E}) \in \arg \min_{\mathcal{A}} \mathcal{F}_2.
\]

Finally, we conclude that this process already gives a unique solution. Indeed, we deduce from the strict convexity of the functional in $\vm$ that every minimizer
has the same momentum component. This in turn implies uniqueness of the associated density as the latter can be recovered from the equation of continuity \eqref{w10}.
Finally, we know from {\bf step 1} that the selected solution is minimal with respect to $\prec$.
Accordingly, Proposotion~\ref{mT10} yields
total energy profile
$\mathcal{E}$.

\subsection{Measurability of the selected semigroup}

Finally, we realize that the time averages
\[
\int_0^\infty \theta_{\ep} (\tau - t) \vr(t, \cdot) \dt,
\int_0^\infty \theta_{\ep} (\tau - t) \vm(t, \cdot) \dt,
\]
where $\theta_\ep$ is a family of regularizing kernels converge weakly
\[
\int_0^\infty \theta_{\ep} (\tau - t) \vr(t, \cdot) \dt \to \vr(\tau, \cdot)
\ \mbox{weakly in}\ L^\gamma(\Omega),
\]
\[
\int_0^\infty \theta_{\ep} (\tau - t) \vm(t, \cdot) \dt \to
\vm(\tau, \cdot)\ \mbox{weakly in}\ L^{\frac{2 \gamma}{\gamma + 1}}(\Omega; R^d)
\]
as $\ep \to 0$ for any $\tau \geq$. Similar argument can be used for $\mathcal{E}(\tau-)$ by means of a ``left'' regularization.
We conclude that the solution mapping
\[
(\vr_0, \vm_0, \mathcal{E}_0) \to
(\vr(\tau, \cdot), \vm(\tau, \cdot), \mathcal{E}(\tau-)),
\]
being a pointwise limit of Borel measurable mappings,
is Borel measurable in $L^{\gamma}(\Omega)-{\rm weak} \times
L^{\frac{2 \gamma}{\gamma +1}}(\Td; \Omega)-{\rm weak} \times R$
for any $\tau > 0$. Seeing that the weak and strong topology generate
the same family of Borel sets, we may also conclude the mapping is strongly measurable.

Let us summarize the results obtained.

\begin{mdframed}[style=MyFrame]

\begin{Theorem}[{\bf Semigroup selection}] \label{sT1}	
There is a Borel measurable mapping
\[
\vc{U}: (\vr_0, \vm_0, \mathcal{E}_0) \in \mathcal{D} \to \mathcal{U}[\vr_0, \vm_0, \mathcal{E}_0] \in \mathcal{T}_{\rm weak} \cap \mathcal{T}_{\rm strong}
\]
such that
\[
\vc{U} [\vr_0, \vm_0, \mathcal{E}_0] (t + s) =
\vc{U} \circ [ \vc{U}[\vr_0, \vm_0, \vc{E}_0](s) ] (t) \ \mbox{for any}\ s,t \geq 0.
\]

Moreover, $\vc{U}$ can obtained by a two step selection process:
\[
\vc{U}[\vr_0, \vm_0, \mathcal{E}_0] = \arg \min_{ \left[ \arg \min_{\mathcal{U}[\vr_0, \vm_0, \mathcal{E}_0]} \mathcal{F}_1 \right]} \mathcal{F}_2 ,
\]
where the functionals $\mathcal{F}_1$, $\mathcal{F}_2$ have been specified in \eqref{F1}, \eqref{FF1} or \eqref{FF2}. The selected solutions are admissible, meaning minimal with respect to the order $\prec$.

	\end{Theorem}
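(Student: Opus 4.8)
The theorem is a synthesis of the constructions of Sections~\ref{F} and \ref{S}. The plan is to perform the two restrictions of the solution set in succession, checking at each stage that the properties {\bf [A1]}--{\bf [A4]} are preserved, that after the second step the restricted set is a single point, and then to read off the semigroup identity and the pointwise-in-time measurability. Throughout, the shift and concatenation properties are supplied by Lemma~\ref{SL1}, which applies verbatim to any convex l.s.c.\ functional $F$ of the stated form.

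\emph{Step 1: first selection.} Put $\mathcal{U}^1[\vr_0,\vm_0,\mathcal{E}_0]:=\mathcal{U}_{\mathcal{F}_1}[\vr_0,\vm_0,\mathcal{E}_0]$ as in \eqref{F2}. Since $\mathcal{F}_1$ is a bounded linear (hence continuous convex) functional on $\mathcal{T}_{\rm weak}$ and, by {\bf [A1]}, $\mathcal{U}[\vr_0,\vm_0,\mathcal{E}_0]$ is nonempty convex and compact there, the set $\mathcal{U}^1[\vr_0,\vm_0,\mathcal{E}_0]$ is again nonempty, convex, compact in $\mathcal{T}_{\rm weak}$, and closed bounded in $\mathcal{T}_{\rm strong}$; the shift and concatenation properties {\bf [A2]}, {\bf [A3]} follow from Lemma~\ref{SL1} with $F=\mathcal{E}$. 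For {\bf [A4]}, the map $\mathcal{A}\mapsto\arg\min_{\mathcal{A}}\mathcal{F}_1$ is continuous on ${\rm comp}[\mathcal{T}_{\rm weak}]$ in the Hausdorff topology (recalled in Section~\ref{F}), so $d\mapsto\mathcal{U}^1[d]$ is Borel measurable into ${\rm comp}[\mathcal{T}_{\rm weak}]$; the Castaing-representation argument of Section~\ref{s}---countably many Borel selections, closed up under rational convex combinations, using that weak and strong closures of convex sets coincide---together with Hess' theorem promotes this to Borel measurability into ${\rm closed}[\mathcal{T}_{\rm strong}]$ with the Wijsman topology. As noted in Section~\ref{F}, every element of $\mathcal{U}^1[d]$ is minimal with respect to $\prec$, i.e.\ admissible.

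\emph{Step 2: second selection, semigroup property, pointwise measurability.} Put $\vc{U}[\vr_0,\vm_0,\mathcal{E}_0]:=\arg\min_{\mathcal{U}^1[\vr_0,\vm_0,\mathcal{E}_0]}\mathcal{F}_2$ with $\mathcal{F}_2$ as in \eqref{FF1}. As $\mathcal{F}_2$ is continuous, coercive and strictly convex on the reflexive uniformly convex space $\mathcal{T}_{\rm strong}$ and $\mathcal{U}^1[d]$ is nonempty closed bounded convex there, this minimizer exists and is unique, so $\vc{U}[d]$ is a single point; the shift and concatenation properties again follow from Lemma~\ref{SL1} with $F=\|\vr\|_{L^q}^q+\|\vm\|_{L^q}^q+|\mathcal{E}|^q$. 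Measurability of $d\mapsto\vc{U}[d]$ reduces to continuity of $\mathcal{A}\mapsto\arg\min_{\mathcal{A}}\mathcal{F}_2$ on the closed convex subsets of $\mathcal{T}_{\rm strong}$ in the Wijsman topology: by the Sonntag--Attouch theorem recalled in Section~\ref{S}, Wijsman convergence coincides with Mosco convergence here, and Mosco convergence together with continuity, coercivity and strict convexity of $\mathcal{F}_2$ forces strong convergence of the minimizers. (With the momentum functional \eqref{FF2} in place of \eqref{FF1}, the arg-min is a priori a subset of $\mathcal{T}_{\rm weak}$ with closed graph under Mosco convergence, and a single point is then recovered from: uniqueness of $\vm$ by strict convexity, recovery of $\vr$ from the continuity equation \eqref{w10}, and recovery of $\mathcal{E}$ from Proposition~\ref{mT10}.) The semigroup identity now follows from the shift property of $\vc{U}[\cdot]$: $\mathcal{S}_s\vc{U}[d]$ minimizes $\mathcal{F}_2$ over $\mathcal{U}^1[\vc{U}[d](s)]$ and, by uniqueness, equals $\vc{U}[\vc{U}[d](s)]$, i.e.\ $\vc{U}[d](t+s)=\vc{U}[\vc{U}[d](s)](t)$ for all $s,t\geq 0$. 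Finally, mollifying in time as in the paragraph preceding the statement, $\int_0^\infty\theta_\ep(\tau-t)\vr(t,\cdot)\dt\to\vr(\tau,\cdot)$ weakly in $L^\gamma(\Omega)$, and analogously for $\vm$ and for $\mathcal{E}(\tau-)$ via one-sided mollification, so the evaluation map $d\mapsto(\vr(\tau,\cdot),\vm(\tau,\cdot),\mathcal{E}(\tau-))$ is a pointwise limit of Borel maps, hence Borel measurable, and strongly so since weak and norm topologies generate the same Borel sets on the relevant reflexive spaces.

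\emph{Main obstacle.} The delicate part is the measurability of the \emph{second} selection. Since $\mathcal{F}_2$ is neither bounded nor continuous on $\mathcal{T}_{\rm weak}$, one cannot stay in the compact-sets/Hausdorff framework that handled $\mathcal{F}_1$ and must pass to $\mathcal{T}_{\rm strong}$, where the solution sets are merely closed. Proving that $d\mapsto\mathcal{U}[d]$ and $d\mapsto\mathcal{U}^1[d]$ are Borel measurable into ${\rm closed}[\mathcal{T}_{\rm strong}]$ (Castaing representations plus Hess' theorem) and that the arg-min operator is continuous under Wijsman/Mosco convergence of closed convex sets is the technical heart; the remaining ingredients---Lemma~\ref{SL1} for the algebraic properties, strict convexity for single-valuedness, and mollification for pointwise measurability---are comparatively routine.
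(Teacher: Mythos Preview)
Your proposal is correct and follows essentially the same approach as the paper: the theorem is a summary of Section~\ref{ss}, and you faithfully reproduce its two-step restriction of the solution set, the preservation of {\bf [A1]}--{\bf [A4]} at each stage via Lemma~\ref{SL1}, the passage from $\mathcal{T}_{\rm weak}$ (Hausdorff/compact) to $\mathcal{T}_{\rm strong}$ (Wijsman/Mosco) for the second measurability argument, and the mollification-in-time for pointwise measurability. Your explicit derivation of the semigroup identity from the shift property plus single-valuedness, and your identification of the Wijsman-to-Mosco step as the main technical obstacle, match the paper's emphasis exactly.
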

	
	\end{mdframed}

\section{Absolute energy minimizers}
\label{A}

Our final goal is to introduce a refined maximal dissipation criterion
to recover a unique weak solution. Consider a solution
$(\vr^1, \vm^1, \mathcal{E}^1) \in \mathcal{U}[\vr_0, \vm_0, \mathcal{E}_0]$ that \emph{is not} maximal. Consequently,
there exists $(\vr^2, \vm^2, \mathcal{E}^2) \in \mathcal{U}[\vr_0, \vm_0, \mathcal{E}_0]$ such that
\[
(\vr^2, \vm^2, \mathcal{E}^2) \prec_{\rm loc}
(\vr^1, \vm^1, \mathcal{E}^1),
\]
meaning
\[
(\vr^2, \vm^2, \mathcal{E}^2)(t) = (\vr^1, \vm^1, \mathcal{E}^1)(t)
\ \mbox{for}\ 0 \leq t \leq T < \infty,
\ \mathcal{E}^2(t) < \mathcal{E}^1(t) \ \mbox{for}\ T < t <  T+\delta.
\]
Next, we compute
\begin{align}
\int_0^\infty \exp(-\lambda t) (\mathcal{E}^2 - \mathcal{E}^1)(t) \dt
&\leq \int_T^{T+ \delta} \exp(-\lambda t) (\mathcal{E}^2 - \mathcal{E}^1)(t) \dt +
\int_{T+ \delta}^\infty \exp(-\lambda t) (\mathcal{E}^2 - \mathcal{E}^1)(t) \dt \br
&\leq \exp(-\lambda(T + \delta)) \left[ \int_T^{T+\delta} (\mathcal{E}^2 - \mathcal{E}^1)(t) \dt + \frac{2}{\lambda} \mathcal{E}_0  \right] < 0,
\label{IN}
\end{align}
provided $\lambda > 0$ is large enough.

This motivates the following definition.

\begin{mdframed}[style=MyFrame]
\begin{Definition}[{\bf Absolute energy minimizer}] \label{wD5}
We say that
\[
(\underline{\vr}, \underline{\vm}, \underline{\mathcal{E}} ) \in
\mathcal{U}[\vr_0, \vm_0, \mathcal{E}_0]
\]
is \emph{absolute energy minimizer} if
\begin{equation} \label{AA1}
\int_0^\infty \exp(-\lambda t) \underline{\mathcal{E}} \dt
\leq \int_0^\infty \exp(-\lambda t) {\mathcal{E}}(t) \dt
\ \mbox{for all}\ \lambda \geq \underline{\lambda}
\end{equation}
for any other solution $(\vr, \vm, \mathcal{E}) \in \mathcal{U}[\vr_0, \vm_0, \mathcal{E}_0]$, where $\underline{\lambda} = \underline{\lambda}(\vr, \vm, \mathcal{E})$ is some number depending on $(\vr, \vm, \mathcal{E})$.
\end{Definition}
\end{mdframed}

It follows from the previous discussion that any absolute energy minimizer
is minimal with respect to the order $\prec_{\rm loc}$; whence an admissible
weak solution of the Euler system.

Next, we report the following variant of Lerch theorem, see \cite[Lemma 5.3]{BreFeiHof19C}.

\begin{Lemma}[{\bf Lerch lemma}] \label{AL2}
Suppose there exists $\underline{\lambda} > 0$ such that
\[
\int_0^\infty \exp(-\lambda t) {\mathcal{E}}^1 \dt
= \int_0^\infty \exp(-\lambda t) {\mathcal{E}^2}(t) \dt
\ \mbox{for all}\ \lambda \geq \underline{\lambda}.
\]

Then
\[
\mathcal{E}^1 = \mathcal{E}^2.
\]	
	\end{Lemma}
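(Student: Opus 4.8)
The plan is to read this as the injectivity of the Laplace transform and then to promote the resulting almost--everywhere equality to a pointwise one using the c\`agl\`ad, monotone structure of the energies. Put $g = \mathcal{E}^1 - \mathcal{E}^2$. Both $\mathcal{E}^i$ are non--negative, non--increasing, and satisfy $\mathcal{E}^i(0) = \mathcal{E}_0$, so $0 \le \mathcal{E}^i(t) \le \mathcal{E}_0$ for every $t \ge 0$; in particular $g$ is bounded and all the Laplace integrals in the statement are finite. The hypothesis then reads
\[
\int_0^\infty \exp(-\lambda t)\, g(t)\, \dt = 0 \qquad \mbox{for all}\ \lambda \ge \underline{\lambda},
\]
and the aim is to deduce $g \equiv 0$ on $[0,\infty)$.

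First I would turn this into a (Hausdorff) moment problem. Evaluating the identity at $\lambda = \underline{\lambda} + n$, $n = 0,1,2,\dots$, and substituting $s = \exp(-t)$ (so that $\dt = -\dd s / s$ and $t \in (0,\infty)$ corresponds to $s \in (0,1)$) gives
\[
0 = \int_0^1 s^{\underline{\lambda}+n-1}\, g(-\log s)\, \dd s = \int_0^1 s^n\, \psi(s)\, \dd s, \qquad \psi(s) := s^{\underline{\lambda}-1}\, g(-\log s).
\]
Since $\underline{\lambda} > 0$ and $g$ is bounded, $\psi \in L^1(0,1)$, and we have just shown that all its moments vanish. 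I would then invoke the Weierstrass approximation theorem: polynomials are uniformly dense in $C([0,1])$, hence $\int_0^1 f\,\psi\, \dd s = 0$ for every $f \in C([0,1])$; approximating indicator functions of subintervals of $(0,1)$ by uniformly bounded continuous functions converging a.e.\ and passing to the limit by dominated convergence yields $\int_E \psi\, \dd s = 0$ for every measurable $E$, whence $\psi = 0$ a.e., i.e.\ $g(t) = 0$ for a.a.\ $t > 0$. (Alternatively, one may first observe that $\lambda \mapsto \int_0^\infty \exp(-\lambda t) g(t)\,\dt$ is holomorphic on $\{ \Re \lambda > 0\}$ and vanishes on $[\underline{\lambda}, \infty)$, a set with a limit point, hence vanishes on the whole half--plane, and then run the same change of variables at $\lambda = 1, 2, \dots$.)

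It remains to upgrade ``a.e.'' to ``everywhere'', which is where the extra structure of $\mathcal{E}^i$ enters. Let $N \subset (0,\infty)$ be the null set on which $\mathcal{E}^1 \ne \mathcal{E}^2$. Given $\tau > 0$, choose $t_k \uparrow \tau$ with $t_k \notin N$; by left--continuity of both functions, $\mathcal{E}^1(\tau) = \lim_k \mathcal{E}^1(t_k) = \lim_k \mathcal{E}^2(t_k) = \mathcal{E}^2(\tau)$, and at $\tau = 0$ both values equal $\mathcal{E}_0$; hence $\mathcal{E}^1 = \mathcal{E}^2$ on $[0,\infty)$. I do not expect a genuine obstacle here, the argument being classical; the only points that need a moment's care are the uniform bound $\mathcal{E}^i \le \mathcal{E}_0$ (which is what makes the Laplace transform well defined) and the fact that the hypothesis is assumed only for large $\lambda$ --- this is precisely why one must use the shifted substitution, carrying the weight $s^{\underline{\lambda}-1}$, rather than reading off a bare moment identity.
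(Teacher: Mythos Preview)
Your argument is correct. The paper does not supply its own proof of this lemma; it merely states it and refers to \cite[Lemma 5.3]{BreFeiHof19C}. Your route---reduce to a Hausdorff moment problem via the substitution $s=\exp(-t)$, invoke Weierstrass approximation to conclude $\mathcal{E}^1=\mathcal{E}^2$ a.e., then upgrade to pointwise equality using left--continuity---is the classical proof of Lerch's injectivity theorem adapted to the present c\`agl\`ad setting, and every step is justified.

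One small remark: the lemma as written does not explicitly assume that $\mathcal{E}^1$ and $\mathcal{E}^2$ share the same value at $t=0$, and the Laplace transform cannot detect a single point. You quietly import $\mathcal{E}^1(0)=\mathcal{E}^2(0)=\mathcal{E}_0$ from the surrounding context (both energies belong to the same solution set $\mathcal{U}[\vr_0,\vm_0,\mathcal{E}_0]$). This is exactly how the lemma is used in the paper, so no harm is done, but it would be cleaner to state this hypothesis explicitly or to note that your argument yields $\mathcal{E}^1(\tau)=\mathcal{E}^2(\tau)$ for all $\tau>0$ unconditionally, with equality at $\tau=0$ following from the shared initial datum.
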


It follows from Lemma \ref{AL2} that there exists at most one absolute energy minimizer.
Indeed it is easy to see that the set of all absolute minimizers is convex. Suppose that are two
absolute minimizers $(\underline{\vr}^1, \underline{\vm}^1, \underline{\mathcal{E}})$ and
$(\underline{\vr}^2, \underline{\vm}^2, \underline{\mathcal{E}})$, where
\begin{equation} \label{pom1}
\mathcal{E} = \intO{ E(\vr^1, \vm^1) } = \intO{ E(\vr^2, \vm^2) }.
\end{equation}
The convex combination
\[
(\underline{\vr}, \underline{\vm}, \underline{\mathcal{E}}) =
\frac{1}{2} (\underline{\vr}^1, \underline{\vm}^1, \underline{\mathcal{E}}) +
\frac{1}{2} (\underline{\vr}^2, \underline{\vm}^2, \underline{\mathcal{E}})
\]
is another absolute energy minimizers; whence a weak solution satisfying
\begin{equation} \label{pom2}
\mathcal{E} = \intO{ E \left(\frac{1}{2} \underline{\vr}^1 +
	\frac{1}{2} \underline{\vr}^2, \frac{1}{2} \underline{\vm}^1 +
	\frac{1}{2} \underline{\vm}^2 \right) }.
\end{equation}
As $E$ is strictly convex, the relations \eqref{pom1}, \eqref{pom2} are compatible only if
\[
\underline{\vr}^1 = \underline{\vr}^2,\ \underline{\vm}^1 = \underline{\vm}^2.
\]

Let us summarize the previous discussion.

\begin{mdframed}[style=MyFrame]

	\begin{Theorem}[{\bf Absolute energy minimizers}] \label{AT1}
		Let
		\[
		(\underline{\vr}, \underline{ \vm}, \underline{\mathcal{E}}) \in \mathcal{U}[\vr_0, \vm_0, \mathcal{E}_0]
		\]	
		be an absolute energy minimizer.
		
		Then the following holds true:
		\begin{itemize}
			\item
			$(\underline{\vr}, \underline{ \vm}, \underline{\mathcal{E}})$ is a maximal dissipative solution, in particular
			\begin{equation} \label{A1}
				D_{\underline{\mathcal{E}}} (t) = \underline{\mathcal{E}}(t+) - \intO{ E(\underline{\vr}, \underline{ \vm}) (t, \cdot)} = 0
			\end{equation}
			for any $t \geq 0$. Thus $(\underline{\vr}, \underline{ \vm}, \underline{\mathcal{E}})$ is a weak solution
			of the Euler system, with
			\[
			\underline{\mathcal{E}}(t+) = \intO{E(\underline{\vr}, \underline{\vm)}(t, \cdot)} \ \mbox{for any}\ t \geq 0.
			\]
			
			\item The solution $(\underline{\vr}, \underline{ \vm })(t)$ is right (strongly) continuous in
			\begin{equation} \label{A2}
				L^\gamma(\Omega) \times L^{\frac{2 \gamma}{\gamma + 1}}(\Omega; R^d)
			\end{equation}
			at any $t \geq 0$.
			
			\item
		Each set $\mathcal{U}[\vr_0, \vm_0, \mathcal{E}_0]$ contains at most one absolute energy minimizer.

		\end{itemize}	
	\end{Theorem}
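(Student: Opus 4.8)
The three assertions are essentially packaged in the material preceding the statement; I would assemble them as follows, the only genuinely new point being the right strong continuity \eqref{A2}.

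\textbf{The weak-solution property.} First I would argue that $(\underline{\vr}, \underline{\vm}, \underline{\mathcal{E}})$ is minimal with respect to $\prec_{\rm loc}$. If it were not, there would be $(\vr, \vm, \mathcal{E}) \in \mathcal{U}[\vr_0, \vm_0, \mathcal{E}_0]$ with $(\vr, \vm, \mathcal{E}) \prec_{\rm loc} (\underline{\vr}, \underline{\vm}, \underline{\mathcal{E}})$, and then the computation \eqref{IN} produces $\int_0^\infty \exp(-\lambda t)\mathcal{E}(t)\dt < \int_0^\infty \exp(-\lambda t)\underline{\mathcal{E}}(t)\dt$ for every sufficiently large $\lambda$, in contradiction with \eqref{AA1}. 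Thus $(\underline{\vr}, \underline{\vm}, \underline{\mathcal{E}})$ is maximal dissipative, and Theorem \ref{dT2} gives $D_{\underline{\mathcal{E}}}(t)=0$ for all $t\ge0$, i.e.\ $\underline{\mathcal{E}}(t+)=\intO{ E(\underline{\vr},\underline{\vm})(t,\cdot) }$ and $(\underline{\vr},\underline{\vm})$ is an admissible weak solution with non--increasing energy.

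\textbf{Right strong continuity.} Fix $t\ge0$ and let $s_n\downarrow t$. By \eqref{w6} we have $\underline{\vr}(s_n)\rightharpoonup\underline{\vr}(t)$ in $L^\gamma(\Omega)$ and $\underline{\vm}(s_n)\rightharpoonup\underline{\vm}(t)$ in $L^{\frac{2\gamma}{\gamma+1}}(\Omega;R^d)$. Combining the identity $\intO{ E(\underline{\vr},\underline{\vm})(s,\cdot) }=\underline{\mathcal{E}}(s+)$ with the monotonicity of $\underline{\mathcal{E}}$ yields, for $s>t$,
\[
\intO{ E(\underline{\vr},\underline{\vm})(s,\cdot) } \le \underline{\mathcal{E}}(t+) = \intO{ E(\underline{\vr},\underline{\vm})(t,\cdot) },
\]
while weak lower semicontinuity of the convex energy gives the matching lower bound as $s\to t+$; hence $\intO{ E(\underline{\vr},\underline{\vm})(s_n,\cdot) }\to\intO{ E(\underline{\vr},\underline{\vm})(t,\cdot) }$. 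I would then upgrade weak convergence plus convergence of the (strictly convex) energy to strong convergence: splitting $E(\vr,\vm)=P(\vr)+\tfrac12|\vm|^2/\vr$ into two separately weakly lower semicontinuous nonnegative pieces forces each to converge; $\|\underline{\vr}(s_n)\|_{L^\gamma}\to\|\underline{\vr}(t)\|_{L^\gamma}$ together with uniform convexity of $L^\gamma$ gives $\underline{\vr}(s_n)\to\underline{\vr}(t)$ in $L^\gamma$, and the hidden-convexity identity
\[
\frac{|\vm_1|^2}{\vr_1}+\frac{|\vm_2|^2}{\vr_2}-\frac{|\vm_1+\vm_2|^2}{\vr_1+\vr_2}=\frac{|\vr_2\vm_1-\vr_1\vm_2|^2}{\vr_1\vr_2(\vr_1+\vr_2)}
\]
then turns convergence of $\int\tfrac12|\underline{\vm}(s_n)|^2/\underline{\vr}(s_n)$ into $\underline{\vm}(s_n)\to\underline{\vm}(t)$ in $L^{\frac{2\gamma}{\gamma+1}}(\Omega;R^d)$. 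Since the sequence $s_n\downarrow t$ was arbitrary, this is right strong continuity at $t$. The main obstacle is precisely this last passage for the momentum: $L^{2\gamma/(\gamma+1)}$ is the borderline exponent, the kinetic energy is weighted by $\underline{\vr}$, and one must control the defect term above on and near the (shrinking) vacuum set --- which is where the already--established strong convergence of $\underline{\vr}(s_n)$ is essential; alternatively one can simply quote the standard compactness lemma for the compressible energy.

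\textbf{Uniqueness.} The set of absolute energy minimizers is convex. Given two of them, applying \eqref{AA1} to each with the other as competitor gives $\int_0^\infty\exp(-\lambda t)\underline{\mathcal{E}}^1\dt=\int_0^\infty\exp(-\lambda t)\underline{\mathcal{E}}^2\dt$ for all large $\lambda$, whence $\underline{\mathcal{E}}^1=\underline{\mathcal{E}}^2=:\underline{\mathcal{E}}$ by Lemma \ref{AL2}. By the first part, $\underline{\mathcal{E}}(t+)=\intO{ E(\underline{\vr}^i,\underline{\vm}^i)(t,\cdot) }$ for $i=1,2$; by Lemma \ref{Ls1} the arithmetic mean $(\underline{\vr},\underline{\vm},\underline{\mathcal{E}})$ is a dissipative solution, still an absolute energy minimizer (its energy profile is again $\underline{\mathcal{E}}$), hence a weak solution, so $\underline{\mathcal{E}}(t+)=\intO{ E(\underline{\vr},\underline{\vm})(t,\cdot) }$. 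Comparing the two expressions forces $\intO{ E(\tfrac12(\underline{\vr}^1+\underline{\vr}^2),\tfrac12(\underline{\vm}^1+\underline{\vm}^2)) }=\tfrac12\intO{ E(\underline{\vr}^1,\underline{\vm}^1) }+\tfrac12\intO{ E(\underline{\vr}^2,\underline{\vm}^2) }$ for a.a.\ $t$, and strict convexity of $E$ on its domain yields $(\underline{\vr}^1,\underline{\vm}^1)=(\underline{\vr}^2,\underline{\vm}^2)$ a.e.; weak time continuity extends this to every $t\ge0$.
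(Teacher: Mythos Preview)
Your proposal is correct and follows the same route as the paper, which presents Theorem~\ref{AT1} explicitly as a summary of the preceding discussion: part~1 via inequality~\eqref{IN} and Theorem~\ref{dT2}, part~3 via Lemma~\ref{AL2} and the strict-convexity argument with the midpoint. The paper does not spell out the right strong continuity~\eqref{A2} at all, so your energy-convergence-plus-weak-convergence argument is a genuine addition; it is the standard mechanism (and your caveat about the momentum step near vacuum is appropriate---this is exactly where one typically invokes the compressible-energy compactness lemma rather than the bare identity).
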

	
\end{mdframed}

Finally, it follows from inequality \eqref{IN} that any \mbox{absolute} minimizer with respect to order $\prec_{\rm loc}$ is an absolute
energy minimizer. Recall that $(\underline{\vr}, \underline{\vm}, \underline{\mathcal{E}})
\in \mathcal{U}[\vr_0, \vm_0, \mathcal{E}_0]$ is an absolute minimizer with respect to
order $\prec_{\rm loc}$ if
\[
(\underline{\vr}, \underline{\vm}, \underline{\mathcal{E}}) \prec_{\rm loc}
(\vr, \vm, \mathcal{E}) \ \mbox{for any}\
(\vr, \vm, \mathcal{E}) \in \mathcal{U}[\vr_0, \vm_0, \mathcal{E}_0], \ (\vr, \vm, \mathcal{E}) \ne
(\underline{\vr}, \underline{\vm}, \underline{\mathcal{E}}).
\]
It is worth noting that the property of being absolute minimizer with respect to $\prec_{\rm loc}$ is a \emph{local} property that can be defined
independently of the actual length of the time interval on which the problem is considered.	

Next, we show that the family of absolute energy minimizers, provided they
exist for any initial data, enjoy the semigroup properties.

\begin{Proposition} \label{AL1}

The following properties hold:

\begin{itemize}

\item
	
Suppose
\[
(\underline{\vr}, \underline{\vm}, \underline{\mathcal{E}}) \in
\mathcal{U}[\vr_0, \vm_0, \mathcal{E}_0]
\]	
is an absolute energy minimizer.

Then its time shift $\mathcal{S}_T
(\underline{\vr}, \underline{\vm}, \underline{\mathcal{E}})$ is
an absolute energy minimizer in $\mathcal{U}[(\underline{\vr}, \underline{\vm}, \underline{\mathcal{E}})(T, \cdot)]$.

\item
Let $(\underline{\vr}^1, \underline{\vm}^1, \underline{\mathcal{E}}^1)$
be an absolute energy minimizer in
$\mathcal{U}[\vr_0, \vm_0, \mathcal{E}_0]$, and
let $(\underline{\vr}^2, \underline{\vm}^2, \underline{\mathcal{E}}^2)$ be an
absolute energy minimizer in $\mathcal{U}[(\underline{\vr}^1, \underline{\vm}^1, \underline{\mathcal{E}}^1)(T, \cdot)]$.

Then their concatenation
\[
(\underline{\vr}^1, \underline{\vm}^1, \underline{\mathcal{E}}^1)
\cup_T (\underline{\vr}^2, \underline{\vm}^2, \underline{\mathcal{E}}^2)
\]
is an absolute energy minimizer in $\mathcal{U}[\vr_0, \vm_0, \mathcal{E}_0]$, meaning
\[
(\underline{\vr}^1, \underline{\vm}^1, \underline{\mathcal{E}}^1) =(\underline{\vr}^1, \underline{\vm}^1, \underline{\mathcal{E}}^1)
\cup_T (\underline{\vr}^2, \underline{\vm}^2, \underline{\mathcal{E}}^2).
\]

\end{itemize}	
	
\end{Proposition}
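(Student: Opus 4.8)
\emph{Plan.} I would prove the shift property first, directly from Definition~\ref{wD5} together with the structural properties \textbf{[A2]} and \textbf{[A3]}, and then obtain the concatenation property as a short corollary of the shift property and the uniqueness of absolute energy minimizers in Theorem~\ref{AT1}. For the shift property, let $(\underline{\vr}, \underline{\vm}, \underline{\mathcal{E}}) \in \mathcal{U}[\vr_0, \vm_0, \mathcal{E}_0]$ be an absolute energy minimizer, fix $T \geq 0$, and set $d_T = (\underline{\vr}(T, \cdot), \underline{\vm}(T, \cdot), \underline{\mathcal{E}}(T))$. Since every dissipative solution satisfies $\intO{ E(\vr, \vm)(\tau) } \leq \mathcal{E}(\tau+)$ for all $\tau \geq 0$ (cf.\ \eqref{s1}, \eqref{w9}) and $\underline{\mathcal{E}}$ is non--increasing, we have $\intO{ E(\underline{\vr}, \underline{\vm})(T, \cdot) } \leq \underline{\mathcal{E}}(T+) \leq \underline{\mathcal{E}}(T)$, so $d_T$ is admissible initial data and, by \textbf{[A2]}, the shifted trio $\mathcal{S}_T[\underline{\vr}, \underline{\vm}, \underline{\mathcal{E}}]$ lies in $\mathcal{U}[d_T]$. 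Now take an arbitrary competitor $(\vr, \vm, \mathcal{E}) \in \mathcal{U}[d_T]$ and form the concatenation $w = (\underline{\vr}, \underline{\vm}, \underline{\mathcal{E}}) \cup_T (\vr, \vm, \mathcal{E})$; the compatibility requirement of \textbf{[A3]}, namely $\intO{ E(\underline{\vr}, \underline{\vm})(T, \cdot) } \leq \mathcal{E}(0) = \underline{\mathcal{E}}(T) $, is exactly the inequality above, so $w \in \mathcal{U}[\vr_0, \vm_0, \mathcal{E}_0]$. Applying \eqref{AA1} for $(\underline{\vr}, \underline{\vm}, \underline{\mathcal{E}})$ against $w$ yields a threshold $\underline{\lambda}(w)$ such that, for all $\lambda \geq \underline{\lambda}(w)$,
\[
\int_0^T \exp(-\lambda t)\, \underline{\mathcal{E}}(t)\dt + \int_T^\infty \exp(-\lambda t)\, \underline{\mathcal{E}}(t)\dt \leq \int_0^T \exp(-\lambda t)\, \underline{\mathcal{E}}(t)\dt + \int_T^\infty \exp(-\lambda t)\, \mathcal{E}(t-T)\dt .
\]
Cancelling the common segment on $[0,T]$, changing variables $t \mapsto t+T$, and dividing by $\exp(-\lambda T) > 0$ gives
\[
\int_0^\infty \exp(-\lambda t)\, \underline{\mathcal{E}}(t+T)\dt \leq \int_0^\infty \exp(-\lambda t)\, \mathcal{E}(t)\dt \quad \mbox{for all}\ \lambda \geq \underline{\lambda}(w),
\]
which is precisely \eqref{AA1} for $\mathcal{S}_T[\underline{\vr}, \underline{\vm}, \underline{\mathcal{E}}]$ tested against $(\vr, \vm, \mathcal{E})$, with threshold $\underline{\lambda}(\vr, \vm, \mathcal{E}) := \underline{\lambda}(w)$. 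As the competitor was arbitrary, $\mathcal{S}_T[\underline{\vr}, \underline{\vm}, \underline{\mathcal{E}}]$ is an absolute energy minimizer in $\mathcal{U}[d_T]$.

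For the concatenation property, let $(\underline{\vr}^1, \underline{\vm}^1, \underline{\mathcal{E}}^1)$ be an absolute energy minimizer in $\mathcal{U}[\vr_0, \vm_0, \mathcal{E}_0]$ and $(\underline{\vr}^2, \underline{\vm}^2, \underline{\mathcal{E}}^2)$ an absolute energy minimizer in $\mathcal{U}[(\underline{\vr}^1, \underline{\vm}^1, \underline{\mathcal{E}}^1)(T, \cdot)]$. By the shift property just established, $\mathcal{S}_T[\underline{\vr}^1, \underline{\vm}^1, \underline{\mathcal{E}}^1]$ is \emph{also} an absolute energy minimizer in $\mathcal{U}[(\underline{\vr}^1, \underline{\vm}^1, \underline{\mathcal{E}}^1)(T, \cdot)]$, and by the last bullet of Theorem~\ref{AT1} each solution set contains at most one absolute energy minimizer; hence $(\underline{\vr}^2, \underline{\vm}^2, \underline{\mathcal{E}}^2) = \mathcal{S}_T[\underline{\vr}^1, \underline{\vm}^1, \underline{\mathcal{E}}^1]$. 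Unwinding the definitions of $\cup_T$ and $\mathcal{S}_T$: for $0 \leq t \leq T$ the concatenation equals $(\underline{\vr}^1, \underline{\vm}^1, \underline{\mathcal{E}}^1)(t)$, and for $t > T$ it equals $\mathcal{S}_T[\underline{\vr}^1, \underline{\vm}^1, \underline{\mathcal{E}}^1](t-T) = (\underline{\vr}^1, \underline{\vm}^1, \underline{\mathcal{E}}^1)(t)$, so
\[
(\underline{\vr}^1, \underline{\vm}^1, \underline{\mathcal{E}}^1) \cup_T (\underline{\vr}^2, \underline{\vm}^2, \underline{\mathcal{E}}^2) = (\underline{\vr}^1, \underline{\vm}^1, \underline{\mathcal{E}}^1),
\]
which is an absolute energy minimizer in $\mathcal{U}[\vr_0, \vm_0, \mathcal{E}_0]$ by hypothesis.

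\emph{Main obstacle.} Once the two building blocks are available the argument is bookkeeping, and the only genuinely delicate point is the handling of the parameter $\underline{\lambda}$ in the shift step: one must check that the threshold obtained by testing \eqref{AA1} with the \emph{concatenated} solution $w$ may legitimately be reused as a threshold for the original competitor $(\vr, \vm, \mathcal{E})$. This succeeds precisely because the two Laplace--type integrals attached to $(\underline{\vr}, \underline{\vm}, \underline{\mathcal{E}})$ and to $w$ share the same segment on $[0,T]$ and differ on $(T, \infty)$ only by the positive factor $\exp(-\lambda T)$, so the inequality transfers to the tails without loss. All the remaining verifications -- that the shifts, concatenations and induced data stay within the admissible classes -- are covered by \textbf{[A2]}, \textbf{[A3]}, and the elementary estimate $\intO{ E(\underline{\vr}, \underline{\vm})(T, \cdot) } \leq \underline{\mathcal{E}}(T+) \leq \underline{\mathcal{E}}(T)$ recalled above.
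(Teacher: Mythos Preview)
Your proof is correct and follows essentially the same approach as the paper. For the shift property your argument coincides with the paper's: concatenate the shifted minimizer with an arbitrary competitor, apply \eqref{AA1}, cancel the common $[0,T]$ segment, and divide out $\exp(-\lambda T)$. For the concatenation property the paper first carries out a direct Laplace-integral computation and only afterwards remarks that the conclusion also follows from the shift property together with uniqueness (Theorem~\ref{AT1}); you go straight to that shift-plus-uniqueness observation, which is a slightly more economical route to the same conclusion.
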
	

\begin{proof}
	
{\bf Shift property.}

Let $(\vr, \vm, \mathcal{E}) \in \mathcal{U}[(\underline{\vr}, \underline{\vm}, \underline{\mathcal{E}})(T, \cdot)]$ be an arbitrary solution.
We have
\begin{align}
\int_0^\infty &\exp(-\lambda t) \mathcal{S}_T \underline{\mathcal{E}}(t) \dt \equiv 	
\int_0^\infty \exp(-\lambda t) \underline{\mathcal{E}}(t + T) \dt \br
&= \exp(\lambda T) \int_0^\infty \exp(-\lambda (t + T)) \underline{\mathcal{E}}(t + T) \dt \br &= \exp(\lambda T) \left[ \int_0^\infty \exp(-\lambda t) \underline{\mathcal{E}}(t) \dt - \int_0^T \exp(- \lambda t) \underline{\mathcal{E}}(t) \dt \right] \br
&\leq \exp(\lambda T) \left[ \int_0^\infty \exp(-\lambda t) [\underline{\mathcal{E}} \cup_T \mathcal{E}](t) \dt - \int_0^T \exp(- \lambda t) \underline{\mathcal{E}}(t) \dt \right] \br
&= \exp(\lambda T) \int_T^\infty \exp(- \lambda t) \mathcal{E}(t - T) \dt  = \int_0^\infty \exp(- \lambda t) \mathcal{E}(t) \dt
\nonumber
\end{align}	
for a suitable sequence $\lambda \geq \underline{\lambda}$ associated to
$(\underline{\vr}, \underline{\vm}, \underline{\mathcal{E}}) \cup_T (\vr, \vm, \mathcal{E})$.	

\medskip

{\bf Concatenation property.}

A straightforward manipulation yields
\begin{align}
\int_0^\infty &\exp (- \lambda t) [\underline{\mathcal{E}}^1
\cup_T \underline{\mathcal{E}}^2](t) \dt  =
\int_0^T \exp(-\lambda t) \underline{\mathcal{E}}^1(t) \dt +
\int_T^\infty \exp(-\lambda t) \underline{\mathcal{E}}^2(t - T) \dt \br
&= \int_0^T \exp(-\lambda t) \underline{\mathcal{E}}^1(t) \dt
+ \exp(\lambda T) \int_0^\infty \exp(- \lambda t) \underline{\mathcal{E}}^2 (t) \dt \br
&\leq \int_0^T \exp(-\lambda t) \underline{\mathcal{E}}^1(t) \dt +
\exp(\lambda T) \int_0^\infty \exp (- \lambda t) \mathcal{S}_T \underline{\mathcal{E}}^1 (t) \dt =
\int_0^\infty \exp(-\lambda t) \underline{\mathcal{E}}^1(t) \dt
\nonumber
\end{align}	
for all $\lambda \geq \underline{\lambda}$ related to $\mathcal{S}_T \underline{\mathcal{E}}^1$. As a matter of fact, it
follows from the previous step and uniqueness of absolute minimizers that
\[
(\vr^2, \vm^2, \mathcal{E}^2) = \mathcal{S}_T (\vr^1, \vm^1, \mathcal{E}^1).
\]
We conclude
\[
\underline{\mathcal{E}}^1
\cup_T \underline{\mathcal{E}}^2 = \underline{\mathcal{E}}^1.
\]
	
	\end{proof}

\section{Conclusion}

We have identified five different classes of dissipative solutions emanating from given data $(\vr_0, \vm_0, \mathcal{E}_0)$ (see Figure \ref{fig}):
\begin{align}
\Big\{ \mbox{absolute energy minimizer} \Big\} &\subset \Big\{ \mbox{maximal dissipative solutions} \Big\} \subset \Big\{\mbox{admissible weak solutions}\Big\} \br
&\subset \Big\{ \mbox{admissible dissipative solutions} \Big\} \subset
\Big\{ \mbox{dissipative solutions} \Big\};
\nonumber
\end{align}
Admissible dissipative solutions do exist for any finite energy initial data and admit  a semigroup selection resulting from a two step process.
Maximal dissipative solutions are admissible weak solution of the Euler system. They comply with Dafermos' principle of maximal dissipation. Their existence for general initial data, however, remains an open problem. The absolute energy minimizer, provided it exists, is a unique weak solution in
$\mathcal{U}[\vr_0, \vm_0, \mathcal{E}_0]$.

\begin{figure}[h!]
\centering\includegraphics[width=130mm]{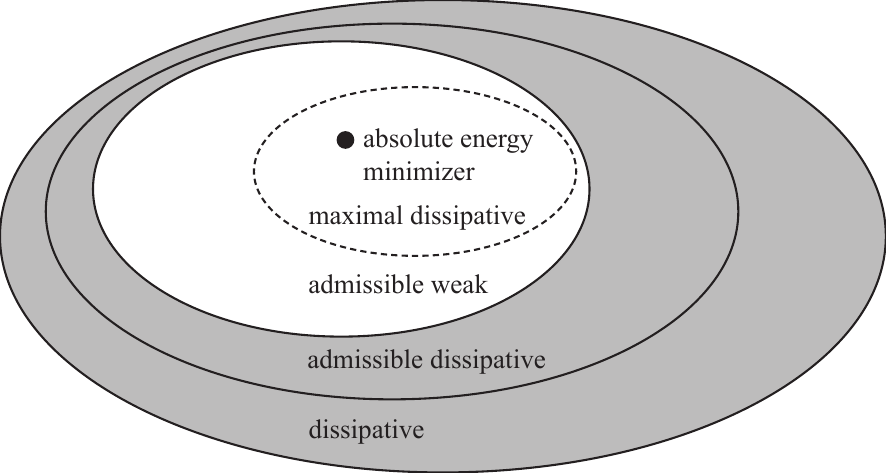}
\caption{Relations between the various solution concepts. The black dot denotes a solution set with a unique element. The gray color indicates solution sets with at least one element, while the existence of elements of the white sets is open. We prove that any maximal dissipative solution is in fact an admissible weak solution.}
\label{fig}
\end{figure}

\section*{Data availability statement} 
Data sharing is not applicable to this article as no
data sets were generated  during the current study.

\section*{Conflict of interest}
The authors have no relevant financial or non-financial interests to disclose.

\def\cprime{$'$} \def\ocirc#1{\ifmmode\setbox0=\hbox{$#1$}\dimen0=\ht0
	\advance\dimen0 by1pt\rlap{\hbox to\wd0{\hss\raise\dimen0
			\hbox{\hskip.2em$\scriptscriptstyle\circ$}\hss}}#1\else {\accent"17 #1}\fi}


\end{document}